\newcommand{\rrvert}{\vert}
\newcommand{\llvert}{\vert}
\newcommand{\Z}{\mathbb{Z}}
\newcommand{\C}{\mathbb{C}}
\newcommand{\R}{\mathbb{R}}
\newcommand{\Hb}{\mathbb{Hb}}
\newcommand{\E}{\mathbb{E}}
\renewcommand{\i}{\mathrm{i}}
\newcommand{\si}{\sigma}
\newcommand{\te}{\theta}
\newcommand{\be}{\beta}
\newcommand{\de}{\delta}
\newcommand{\Pc}{\mathbf{P}}
\newcommand{\Pl}{\mathcal{P}}
\newcommand{\D}{\mathcal{D}}
\newcommand{\om}{{{\mathsf{w}}}}
\newcommand{\omb}{{\overline{\om}}}
\newcommand{\spt}{\mathsf{s}}
\newcommand{\rpt}{\mathsf{r}}
\newcommand{\Kast}{\mathsf{Kast}}
\newcommand{\X}{\mathsf{X}}
\newcommand{\Pp}{\mathbb{P}}
\newcommand{\Ga}{\mathfrak{C}}
\newcommand{\hor}{\mathsf{H}}
\newcommand{\ver}{\mathsf{V}}
\newcommand{\GFF}{\mathsf{GFF}}
\newcommand{\G}{\mathcal{G}}
\newcommand{\wt}{\mbox{
\includegraphics{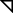}
}}
\newcommand{\bt}{\mbox{
\includegraphics{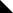}
}}
\newcommand{\lozv}{\mbox{
\includegraphics{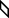}
}}
\newcommand{\lozl}{\mbox{
\includegraphics{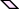}
}}
\newcommand{\lozs}{\mbox{
\includegraphics{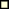}
}}
\newtheorem{proposition}{Proposition}[section]
\newtheorem{lemma}[proposition]{Lemma}
\newtheorem{corollary}[proposition]{Corollary}
\newtheorem{theorem}[proposition]{Theorem}
\begin{document}
\begin{frontmatter}

\title{Asymptotics of uniformly random lozenge tilings of polygons. Gaussian free field\thanksref{T1}}
\runtitle{Asymptotics of random lozenge tilings. Gaussian free field\hspace*{5pt}}

\begin{aug}
\author[A]{\fnms{Leonid} \snm{Petrov}\corref{}\ead[label=e1]{l.petrov@neu.edu}}
\runauthor{L. Petrov}
\affiliation{Northeastern University and Institute for Information
Transmission~Problems}
\address[A]{Department of Mathematics\\
Northeastern University\\
360 Huntington ave.\\
Boston, Massachusetts 02115\\
USA\\
and\\
Institute for Information\\
Transmission Problems\\
Bolshoy Karetny per. 19\\
Moscow, 127994\\
Russia\\
\printead{e1}}
\end{aug}
\thankstext{T1}{Supported in part by the RFBR-CNRS Grants 10-01-93114 and 11-01-93105.}

% HISTORY:
\received{\smonth{9} \syear{2012}}
\revised{\smonth{11} \syear{2012}}

% ABSTRACT
%
\begin{abstract}
We study large-scale height fluctuations of random stepped surfaces
corresponding to uniformly random lozenge tilings of polygons on the
triangular lattice. For a class of polygons (which allows arbitrarily
large number of sides), we show that these fluctuations are
asymptotically governed by a Gaussian free (massless) field. This
complements the similar result obtained in Kenyon [\textit{Comm. Math. Phys.} \textbf{281} (2008) 675--709]
about tilings of regions without
frozen facets of the limit shape.

In our asymptotic analysis we use the explicit double contour integral
formula for the determinantal correlation kernel of the model obtained
previously in Petrov [Asymptotics of random lozenge tilings via Gelfand--Tsetlin
schemes (2012) Preprint].
\end{abstract}
%

% KEYWORDS
%
\begin{keyword}[class=AMS]
\kwd[Primary ]{60G55}
\kwd[; secondary ]{60G15}
\kwd{60C05}
\kwd{82C22}
\end{keyword}

\begin{keyword}
\kwd{Random lozenge tilings}
\kwd{dimer model}
\kwd{height function}
\kwd{Gaussian free field}
\kwd{determinantal point processes}
\end{keyword}
%
% Pirmas kwd is didziosios raides

\end{frontmatter}

%s1 #&#
\section{Introduction and main result} % (fold)
\label{secintroductionandmainresult}

We begin with a description of the model and formulation of necessary
previous results which motivate the main result of the present paper.
The latter is stated in Section~\ref{subresults} below.

%s1.1 #&#
\subsection{Model of uniformly random tilings} % (fold)
\label{submodelofuniformlyrandomtilings}

Consider a polygon drawn on the regular triangular lattice as shown in
Figure~\ref{figpolygontrianglattice}.

In the present paper we study the model of uniformly random tilings of
such polygons by lozenges ($={}$rhombi) of three types:\vspace*{6pt}

\begin{center}
\includegraphics{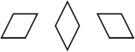}
\end{center}
\vspace*{6pt}

An example of such a tiling is presented in Figure~\ref{figGFFtiling}, left.
Equivalent formulations of the model include:
\begin{itemize}%[$\bullet$]
\item (Dimer interpretation.) Lozenge tilings of a polygon are in a
bijective correspondence with dimer coverings ($={}$perfect matchings) on
the part of the honeycomb graph located inside the polygon; see
Figure~\ref{figGFFtiling}, right.
\item (Stepped surfaces.) One can view each tiling such as in
Figure~\ref{figGFFtiling}, left, as a 2-dimensional projection of a stepped
surface, that is, of a continuous 3-dimensional surface glued out of
$1\times1\times1$ boxes with sides parallel to three coordinate lines
in space.
\end{itemize}

%
%f1 #&#
\begin{figure}

\includegraphics{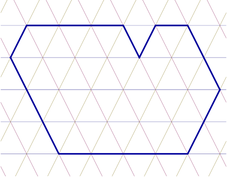}

\caption{Polygon on the triangular lattice.} \label
{figpolygontrianglattice}
\end{figure}
%
%f2 #&#
\begin{figure}

\includegraphics{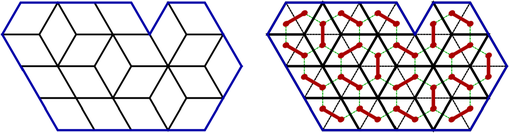}

\caption{Tiling of a polygon on the regular triangular lattice (left)
and its dimer interpretation (right).}
\label{figGFFtiling}
\end{figure}

The model of uniformly random lozenge tilings of polygons has received
significant attention over the past years: \citet{CohnKenyonPropp2000},
\citet{OkounkovKenyon2007Limit}, \citet{Kenyon2004Height}. See also
\citet
{Kenyon2007Lecture} for a detailed exposition of the subject and more
references.

% subsection model_of_uniformly_random_tilings (end)

%s1.2 #&#
\subsection{Affine transform and the class of polygons} % (fold)
\label{subaffinetransformandtheclassofpolygons}

For technical convenience, we perform a simple affine transform of
lozenges which were present in Figure~\ref{figGFFtiling}; see
Figure~\ref{figaffinetransform}.
%
%f3 #&#
\begin{figure}[b]

\includegraphics{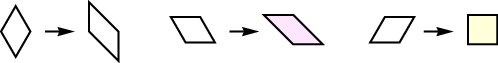}

\caption{Affine transform of lozenges.}
\label{figaffinetransform}
\end{figure}
After this transform, tilings of polygons will look like the one in
Figure~\ref{figpolygontilingparticles} below. Polygons which are
tiled will thus be drawn on the standard square grid, with all sides
parallel either to one of the coordinate axes, or the vector $(-1,1)$.
We will denote the horizontal and the vertical integer coordinates on
the square grid by $x$ and $n$, respectively.

%f4 #&#
\begin{figure}

\includegraphics{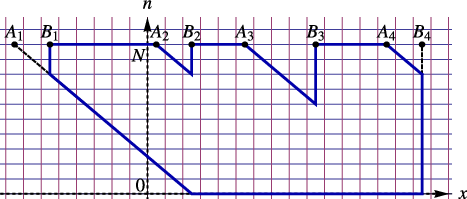}

\caption{A polygon from the class we consider. In this example $k=4$,
and the polygon has $3k=12$ sides.}
\label{figpolygonalregion}
\end{figure}

We will restrict ourselves to polygons of a special kind, as shown in
Figure~\ref{figpolygonalregion}. Every polygon $\Pc$ we consider can
be parametrized by two integers $N=1,2,\ldots$, and $k=2,3,\ldots$ (the
polygon has $3k$ sides), and by $2k$ (proper) half-integers
\[
A_1<B_1<A_2<B_2<
\cdots<A_k<B_k,\qquad A_i,B_i\in
\Z':=\Z+\tfrac12,
\]
subject to the condition $\sum_{i=1}^{k}(B_i-A_i)=N$ which ensures that
there is at least one lozenge tiling of $\Pc$. The bottom side of $\Pc$
lies on the horizontal axis $n=0$, and all the $k-1$ top sides [the
$i$th such side has endpoints $(B_i,N)$ and $(A_{i+1},N)$, $i=1,\ldots,k-1$] lie on one and the same line $n=N$.%, so $N$ is the height of $

% subsection affine_transform_and_the_class_of_polygons (end)

%s1.3 #&#
\subsection{Height function of a tiling} % (fold)
\label{subheightfunction}

We will view our lozenge tilings (as on Figure~\ref{figpolygontilingparticles} below) as projections of 3-dimensional
stepped surfaces onto the $(x,n)$ plane. The surface itself can thus be
interpreted as a graph of a function $h(x,n)$ which is called the \emph
{height function} of the tiling. To be concrete, let us stick to the
convention that lozenges of type \lozl\ correspond to horizontal
planes, that is, planes where the height function is constant. We also
require that $h(x,n)$ is zero near the lower left corner of the
polygon. See Figure~\ref{figGFFheight} for an example and Section~\ref{subdefinitionoftheheightfunction} below for a precise definition.
See also [\citet{Kenyon2007Lecture}, Section~2.8] for more discussion.
%
%f5 #&#
\begin{figure}[t]

\includegraphics{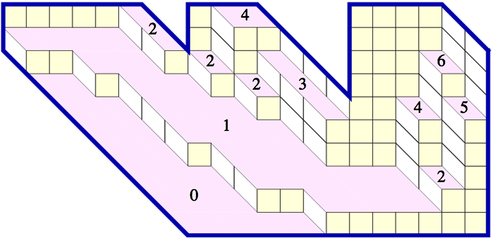}

\caption{Values of the height function $h(x,n)$ on each horizontal plateau.}
\label{figGFFheight}
\end{figure}

The main object of the present paper is the height function
corresponding to the uniformly random lozenge tiling of a polygon $\Pc
$. We assume that $\Pc$ belongs to the class of polygons described in
Section~\ref{subaffinetransformandtheclassofpolygons}. We denote
this \emph{random height function} by $h_{\Pc}(x,n)$.

% subsection height_function (end)

%s1.4 #&#
\subsection{Limit shape} % (fold)
\label{sublimitshape}

We consider large $N$ asymptotics of random tilings as all dimensions
of the polygon $\Pc=\Pc(N)$ grow. That is, let the parameters
$A_i(N),B_i(N)$ of $\Pc(N)$ behave as
%
%e1 #&#
\begin{equation}
\label{scaleAiBi} A_i(N)=[a_i N]+\mathrm{const},\qquad
B_i(N)=[b_i N]+\mathrm{const} \qquad\mbox{($i=1,\ldots,k$)}.
\end{equation}
Here $a_1<b_1<\cdots<a_k<b_k$ are new continuous parameters which
satisfy $\sum_{i=1}^{k}(b_i-a_i)=1$. The constants above are bounded
uniformly in $N$ and are needed to ensure that $A_i(N),B_i(N)\in\Z'$
and $\sum_{k=1}^{N}(B_i(N)-A_i(N))=N$.

In \citet{CohnKenyonPropp2000} it was shown that in this $N\to\infty$
regime, the rescaled random stepped surface concentrates around a
nonrandom limit shape which can be obtained as a unique solution to a
suitable variational problem; see also \citet{CohnLarsenPropp}, \citet
{DMB1997Lozenge} and \citet{Destainville1998Lozenge}. This solution was
described in \citet{OkounkovKenyon2007Limit} by means of the complex
Burgers equation. For polygons we consider in the present paper, the
limit shape is an algebraic surface.

More precisely, the limit shape result means that the height function
$h_{\Pc(N)}$ obeys the following law of large numbers (with almost sure
convergence):
%
%e2 #&#
\begin{equation}
\label{LLN} \frac{h_{\Pc(N)}([\chi N],[\eta N])}{N}\to\boldsymbol{h}(\chi,\eta),\qquad N\to\infty,
\end{equation}
where $(\chi,\eta)$ are the new global continuous coordinates, and
$\boldsymbol{h}(\chi,\eta)$ is the function whose graph is the limit
shape. The new coordinates $(\chi,\eta)$ are assumed to belong to the
limiting polygon $\Pl$ which is parametrized by $\{a_i,b_i\}_{i=1}^{k}$
in the same way as it was for $\Pc(N)$ and $\{A_i(N),B_i(N)\}
_{i=1}^{k}$ in Section~\ref{subaffinetransformandtheclassofpolygons}; see Figure~\ref{figfrozenboundary}. The new polygon $\Pl$ is located inside the
strip $0\le\eta\le1$.

%
%f6 #&#
\begin{figure}[t]

\includegraphics{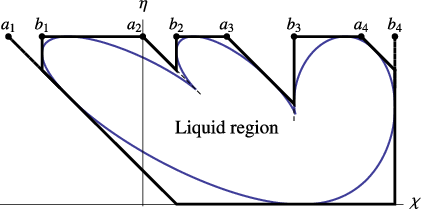}

\caption{The limiting polygon $\Pl$ on the $(\chi,\eta)$ plane and the
frozen boundary curve.}
\label{figfrozenboundary}
\end{figure}

A feature of the model we deal with is that the limit shape develops
\emph{frozen facets} where the function $\boldsymbol{h}(\chi,\eta)$ is
linear. In other words, frozen facets correspond to zones inside $\Pl$
where lozenges of only one type are asymptotically present. Along with
the frozen facets, there is a connected open \emph{liquid region} $\D
\subset\Pl$. For $(\chi,\eta)\in\D$, the limiting height function
$\boldsymbol h(\chi,\eta)$ is curved: asymptotically inside the liquid
region one sees a random mixture of all types of lozenges; for example,
see \citeauthor{CohnLarsenPropp} [(\citeyear{CohnLarsenPropp}), Figure~2],
\citeauthor{BorodinGorin2008} [(\citeyear{BorodinGorin2008}), Figure~5],
\citeauthor{OkounkovKenyon2007Limit} [(\citeyear{OkounkovKenyon2007Limit}), Figure~1] for illustrations of uniformly
random tilings with small mesh where the limit shape and the frozen
boundary are clearly seen.

%re1.1 #&#
\begin{remark}\label{rmkbulkreference}
There are more precise results in this direction. Namely, the
asymptotic local distribution of lozenges around a given global
position $(\chi,\eta)\in\D$ is governed by an ergodic translation
invariant Gibbs measure on tilings of the whole plane. Such a measure
is unique up to fixed proportions of lozenges of all types \citet
{Sheffield2008}, and these proportions depend on the slope of the limit
shape at the given point $(\chi,\eta)$. For polygons in the class
described in Section~\ref{subaffinetransformandtheclassofpolygons}, this result was
established in \citet{Petrov2012}. For $k=2$ (when the polygon is a
hexagon) this was obtained earlier in \citet{BKMM2003}, \citet
{Gorin2007Hexagon}. For tilings of regions when the limit shape has no
frozen parts, the same local behavior was shown in \citet
{Kenyon2004Height}. See also \citet{okounkov2003correlation} and \citet
{KOS2006} for more detail on the limiting translation invariant ergodic
Gibbs measures.
\end{remark}

% subsection limit_shape (end)

%s1.5 #&#
\subsection{Complex structure on the limit shape surface} % (fold)
\label{subcomplexstructureonthelimitshapesurface}

Complex coordinate on the limit shape surface was introduced in \citet
{OkounkovKenyon2007Limit} [see also \citet{Kenyon2004Height}], and by a
different technique in \citet{Petrov2012}. From the results of \citet
{Petrov2012} it follows (see Section~\ref{subcriticalpointsoftheaction} below) that there exists a
diffeomorphism $\om=\om(\chi,\eta)$ from the liquid region $\D$ to the
upper half plane $\Hb:=\{z\in\C\dvtx\Im z>0\}$. [In \citet{Petrov2012}
the complex coordinate $\om$ was denoted by $\om_c$.] The function
$\om
(\chi,\eta)$ is algebraic, and it satisfies the following degree $k$ equation:
%
%e3 #&#
\begin{equation}
\label{omegaalgebraicequation} (\om-\chi)\prod_{i=1}^{k}(
\om-a_i)= (\om-\chi+1-\eta)\prod_{i=1}^{k}(
\om-b_i),
\end{equation}
and a version of the differential complex Burgers equation [see also
\citet{OkounkovKenyon2007Limit}],
%
%e4 #&#
\begin{equation}
\label{omegacomplexBurgersequation} \frac{\om(\chi,\eta)-\chi}{1-\eta}\cdot \frac{\partial\om(\chi,\eta)}{\partial\chi}=-
\frac{\partial\om(\chi,\eta)}{\partial\eta}.
\end{equation}
The complex coordinate $\om(\chi,\eta)$ can be used, in particular, to
describe the local asymptotics of random tilings mentioned in
Remark \ref{rmkbulkreference}; see \citet{Petrov2012}, Sections~2.3--2.4 for more detail. The complex structure $\om(\chi,\eta)$ on the
liquid region $\D$ is employed in our description of asymptotics of
fluctuations of the height function; see Section~\ref{subresults} below.

% subsection complex_structure_on_the_limit_shape_surface (end)

%s1.6 #&#
\subsection{Gaussian free field} % (fold)
\label{subgaussianfreefield}

Before we proceed to describing our results, let us first briefly
discuss the object which governs the asymptotics of fluctuations of the
height function, namely, the Gaussian free field. This subsection is
adapted from \citet{Sheffield2007GFF}, see that survey for a detailed
and systematic discussion.

The Gaussian free (massless) field $\GFF$ on the upper half plane $\Hb$
is a probability Gaussian measure supported on a suitable class of
generalized functions on $\Hb$ (and not on ordinary functions). In
particular, the value $\GFF(z)$ at a point $z\in\Hb$ does not make sense.

The distribution of $\GFF$ may be understood as follows. For any
sequence\break of compactly supported smooth test functions $\{\phi_r\}
_{r=1}^{\infty}$, the pairings\break $\{\GFF(\phi_r)\}_{r=1}^{\infty}$
form a
sequence of mean zero Gaussian random variables with covariances
\[
 \E\bigl(\GFF(\phi_k)\GFF(\phi_l)\bigr)%&=& \int
%_\Hb|dz|^{2}\bigl(\nabla\phi_k(z),
=\int_{\Hb\times\Hb} |dz_1|^{2}|dz_2|^{2}
\phi_k(z_1) \phi_l(z_2)
\G(z_1,z_2).
\]
Here $(\cdot,\cdot)$ in the first integral is the usual inner
product, and
%
%e5 #&#
\begin{equation}
\label{Greenfunction} \G(z,w):=-\frac{1}{2\pi}\ln\biggl\llvert
\frac{z-w}{z-\bar w} \biggr\rrvert,\qquad z,w\in\Hb
\end{equation}
is the Green function for the Laplace operator on the upper half plane
$\Hb$ with Dirichlet boundary conditions.

Even though the value of $\GFF$ at a point cannot be defined, one can
still think that the expectations of products of values of $\GFF$ at
pairwise distinct points $z_1,\ldots,z_\spt$ are well defined and are
given by
\[
\E\bigl(\GFF(z_1)\cdots\GFF(z_{\spt})\bigr)= %
\cases{\displaystyle\sum_{\si} \prod
_{i=1}^{\spt/2} \G (z_{\si(2i-1)}, z_{\si(2i)} ),
&\quad $\mbox{$\spt$ even}$;\vspace*{2pt}
\cr
0,&\quad $\mbox{$\spt$ odd}$, } %
\]
with sum over all fixed point free involutions ($={}$pairings) $\si$ on
$\{
1,\ldots,\spt\}$. Indeed, for a finite number of test functions,
%
%e6 #&#
\begin{equation}
\label{GFFfunctionsmoments} \E\bigl(\GFF(\phi_1)\cdots\GFF(
\phi_\spt)\bigr)= \int_{\Hb^{\spt}} \E\bigl(
\GFF(z_1)\cdots\GFF(z_{\spt})\bigr) \prod
_{i=1}^{\spt}|dz_i|^{2}
\phi_i(z_i).
\end{equation}
The moments (\ref{GFFfunctionsmoments}) uniquely determine the
Gaussian free field.

% subsection gaussian_free_field (end)

%s1.7 #&#
\subsection{Results} % (fold)
\label{subresults}

Now we are in a position to describe the main results of the present
paper. We are interested in asymptotics of fluctuations
%
%e7 #&#
\begin{equation}
\label{HNdef} H_N(\chi,\eta):=h_{\Pc(N)}\bigl([\chi N],[\eta
N]\bigr)-\E h_{\Pc(N)}\bigl([\chi N],[\eta N]\bigr)
\end{equation}
of the height function (of a uniformly random tiling) around its mean.

%th1.2 #&#
\begin{theorem}[(Moment convergence of fluctuations to $\GFF$)]
\label{thmmomentconvergenceintro}
For pairwise distinct points $(\chi_1,\eta_1),\ldots,(\chi_\spt,\eta
_\spt)$ inside the liquid region $\D$, as we scale the polygon $\Pc(N)$
as in (\ref{scaleAiBi}), the following convergence of moments holds:
%
%e8 #&#
\begin{eqnarray}\label{GFFcorrelations}
&&
\lim_{N\to\infty} \pi^{\spt/2}\E \bigl(
H_N(\chi_1,\eta_1 )\cdots
H_N(\chi_\spt,\eta_\spt) \bigr)
\nonumber\\
&&\qquad= \E \bigl(\GFF\bigl(\om(\chi_1,\eta_1)\bigr)\cdots
\GFF\bigl(\om(\chi_\spt,\eta _\spt)\bigr) \bigr)
\\
&&\qquad= %
\cases{\displaystyle \sum_{\si} \prod
_{i=1}^{\spt/2} \G \bigl(\om(\chi_{\si(2i-1)},
\eta_{\si(2i-1)}), \om(\chi_{\si(2i)},\eta_{\si(2i)}) \bigr), &\quad $
\mbox{$\spt$ even}$;\vspace*{2pt}
\cr
0,&\quad $\mbox{$\spt$ odd}$,} %
\nonumber
\end{eqnarray}
where the sum is taken over all fixed point free involutions $\si$ on
$\{1,\ldots,\spt\}$.
\end{theorem}

%th1.3 #&#
\begin{theorem}[(Central limit theorem for fluctuations of the height function)]
\label{thmweakconvergenceintro}
The random function $\sqrt\pi H_N(\chi,\eta)$ on $\D$ weakly converges
as $N\to\infty$ to the $\om$-pullback of the Gaussian free field
$\GFF$
on $\Hb$.
\end{theorem}

Theorem \ref{thmweakconvergenceintro} means that for any smooth
compactly supported test function $\phi$ on~$\D$, we have the weak
convergence as $N\to\infty$,
%
%e9 #&#
\begin{eqnarray} \label{weakconvergenceofonefunction} \sqrt\pi\int_{\D}
\phi(\chi,\eta)H_N(\chi,\eta)\,d\chi \,d\eta &\to &\int
_{\D}\phi(\chi,\eta)\GFF\bigl(\om(\chi,\eta)\bigr)\,d\chi \,d\eta
\nonumber
\\[-8pt]
\\[-8pt]
\nonumber
&=&\int_{\Hb} \phi\bigl(\om^{-1}(z)\bigr) J(z)
\GFF(z) |dz|^{2},
\end{eqnarray}
where $J(z)$ is the Jacobian of the change of variables $z\to(\chi,\eta
)$ by $\om^{-1}$ [which in fact can be explicitly calculated using
(\ref
{omegaalgebraicequation})--(\ref{omegacomplexBurgersequation})].
Theorem \ref{thmweakconvergenceintro} follows from Theorem~\ref
{thmmomentconvergenceintro} plus an additional bound on moments of
fluctuations of the height function at infinitesimally close points;
see Section~\ref{subconvergencetogffproofoftheoremthmweakconvergenceintro}.

It is worth noting that Gaussian free field fluctuations in random
tiling models were also obtained [along with \citet{Kenyon2004Height}]
in \citet{Ferrari2008}, \citet{Duits2011GFF} and \citet{Kuan2011GFF}.

% subsection results (end)

%s1.8 #&#
\subsection{Strategy of the proof and organization of the paper} %
%(fold)
\label{substrategyoftheproofandorganizationofthepaper}

Our proof is based on an explicit formula for the determinantal
correlation kernel of uniformly random tilings which was established in
\citet{Petrov2012}. We recall these results in Section~\ref{secdeterminantalstructureofrandomtilings}. In Section~\ref{secheightfunctionanditsmultipointfluctuations} we write the
multipoint fluctuations $\E (H_N(\chi_1,\eta_1 )\cdots H_N(\chi
_\spt,\eta_\spt) )$ of the height function in terms of that
correlation kernel. This allows us to establish Theorem \ref
{thmmomentconvergenceintro} and then Theorem \ref
{thmweakconvergenceintro} in Section~\ref{seccompletingtheproofs}
using certain fine asymptotic properties of the correlation kernel
which are obtained in Section~\ref{secasymptoticsofthekernel}.

Our argument generally follows the approach of \citet{Ferrari2008}
(especially see Section~5 in that paper) which in turn was partly
inspired by \citet{Kenyon2004Height}. We also use some ideas from \citet
{Duits2011GFF}. However, our correlation kernel has a more complicated
structure than those of \citet{Ferrari2008} and \citet{Duits2011GFF}: the
critical points of the action (Section~\ref{subcriticalpointsoftheaction}) which are solutions of~(\ref
{omegaalgebraicequation}) cannot be determined explicitly. Thus, in
our Section~\ref{secasymptoticsofthekernel} in order to investigate
asymptotics of the kernel, we must employ certain new considerations
(Sections~\ref{submovingthecontours}--\ref
{subestimatingthesingleintegral}).

% subsection strategy_of_the_proof_and_organization_of_the_paper (end)

% section introduction_and_main_result (end)

%s2 #&#
\section{Determinantal structure of random tilings} % (fold)
\label{secdeterminantalstructureofrandomtilings}

In this section we recall the formula of \citet{Petrov2012} for the
determinantal correlation kernel of our model of uniformly random
tilings. Then we extend that kernel and describe the joint distribution
of all three types of lozenges in our random tiling. Except for Section~\ref{subextensionofkandjointdistributionofthreetypesoflozenges},
this section is essentially taken from \citet{Petrov2012}.

%s2.1 #&#
\subsection{Interlacing particle arrays} % (fold)
\label{subinterlacingparticlearrays}

Let $\Pc$ be a polygon of our class in the $(x,n)$ plane (see Section~\ref{subaffinetransformandtheclassofpolygons}) contained inside
the horizontal strip $0\le n\le N$. We pass from tilings of $\Pc$ to
interlacing particle arrays as follows. We first trivially extend any
tiling of $\Pc$ to a tiling of the whole strip $0\le n\le N$ with $N$
small triangles added on top; see Figure~\ref{figpolygontilingparticles}.
%
%f7 #&#
\begin{figure}[b]

\includegraphics{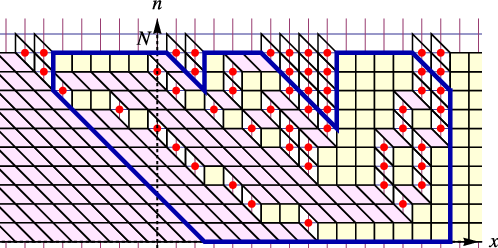}

\caption{Tiling of a polygon and the corresponding interlacing
particle array.}
\label{figpolygontilingparticles}
\end{figure}
Then we place a particle in the center of every lozenge of type \lozv.

Thus, we get a particle array $\X:=\{\mathsf{x}_{j}^{m}\dvtx
m=1,\ldots,N;j=1,\ldots,m\}\in\Z^{N(N+1)/2}$ with precisely $m$ particles at each
$m$th horizontal level, $m=0,1,\ldots, N$. Because we started from a
tiling of $\Pc$, these particles must satisfy the \emph{interlacing
constraints}
%
%e10 #&#
\begin{equation}
\label{interlacing} \mathsf{x}_{j+1}^{m}<
\mathsf{x}_{j}^{m-1}\le\mathsf{x}_{j}^{m}
\end{equation}
(for all $j$'s and $m$'s for which these inequalities can be written
out). Moreover, the particles in the top row of the array are fixed,
%
%e11 #&#
\begin{eqnarray}
\label{fixedtoprow}&& \bigl\{\mathsf{x}_{N}^{N}<\cdots<
\mathsf{x}_{1}^{N}\bigr\}\nonumber\\
&&\qquad=\bigl\{A_{1}+\tfrac
12<A_{1}+\tfrac 32<\cdots<B_1-\tfrac32<B_1-
\tfrac12
\\
&&\hspace*{17pt}\qquad <A_2+\tfrac12<\cdots <B_2-\tfrac12<\cdots<A_{k}+
\tfrac12<\cdots<B_k-\tfrac12\bigr\}\nonumber
\end{eqnarray}
(here $\{A_i,B_i\}_{i=1}^{k}$ are the parameters of $\Pc$; see Section~\ref{subaffinetransformandtheclassofpolygons}). Clearly, lozenge
tilings of $\Pc$ and such interlacing arrays $\X$ with fixed top row
(\ref{fixedtoprow}) are in a bijective correspondence. For a
connection of these arrays with Gelfand--Tsetlin schemes (an object
related to branching of representations of unitary groups), see, for
example, \citet{Petrov2012}, Section~3.

% subsection interlacing_particle_arrays (end)

%s2.2 #&#
\subsection{Determinantal correlation kernel} % (fold)
\label{subdeterminantalcorrelationkernel}

We see from Section~\ref{subinterlacingparticlearrays} that the
uniform measure on the set of all tilings of the polygon $\Pc$ is the
same as the uniform measure on the space of interlacing integer arrays
$\X=\{\mathsf{x}_{j}^{m}\}$ with fixed top row~(\ref
{fixedtoprow}). We denote
both measures by $\Pp_{\Pc}$. Viewing $\X$ as a particle configuration,
we can also think of the measure $\Pp_\Pc$ as of a point process on
$\Z
\times\{1,2,\ldots,N\}$.

%de2.1 #&#
\begin{definition}\label{defcorrelationfunctions}
Let $(x_1,n_1),\ldots,(x_\spt,n_\spt)$ be pairwise distinct positions,
$x_i\in\Z$, $1\le n_i\le N$. The \emph{correlation functions} of the
point process $\Pp_\Pc$ are defined as
\begin{eqnarray*}
&& \rho_\spt(x_1,n_1;\ldots;x_\spt,n_\spt)
\\
&& :=
\Pp_\Pc \bigl(\mbox {there is a particle of the random configuration
$\bigl\{\mathsf{x}_j^{m}\bigr\}$}
\\
&&\hspace*{92pt}\mbox{at position $(x_i,n_i)$ for every $i=1,
\ldots,\spt $} \bigr).
\end{eqnarray*}
\end{definition}

It is well known that the measure $\Pp_{\Pc}$ on interlacing particle
arrays is \emph{determinantal} (for instance, this fact can be deduced
from the Kasteleyn theory; see Section~\ref{subinversekasteleynmatrix} below). That is, there exists a function
$K(x,n;y,m)$ (the \emph{correlation kernel}), such that
%
%e12 #&#
\begin{equation}
\label{correlationkerneldef} \rho_\spt(x_1,n_1;
\ldots;x_\spt,n_\spt)= \det\bigl[K(x_i,n_i;x_j,n_j)
\bigr]_{i,j=1}^{\spt}
\end{equation}
for any $\spt$ and any collection of pairwise distinct positions
$(x_1,n_1),\ldots,(x_\spt,n_\spt)$. About determinantal point processes
in general see the surveys \citet{Soshnikov2000}, \citet
{peres2006determinantal}, \citet{Borodin2009}.

In \citet{Petrov2012} the following explicit formula for the correlation
kernel $K$ of random interlacing arrays $\X$ with fixed top row (\ref
{fixedtoprow}) was obtained:

%
%th2.2 #&#
\begin{theorem}[{[\citet{Petrov2012}]}]
\label{thmK}
For $1\le n_1\le N$, $1\le n_2\le N-1$ and $x_1,x_2\in\Z$, the
correlation kernel of the point process $\Pp_\Pc$ has the
form\setcounter{footnote}{1}\footnote{Here and below $1_{\{\cdot\cdot\cdot\}}$ denotes the indicator of a
set, and $(y)_m:=y(y+1)\cdots(y+m-1)$, $m=1,2,\ldots$ [with $(y)_0:=1$]
is the Pochhammer symbol.}
%
%e13 #&#
\begin{eqnarray}
\label{Kformula} &&K(x_1,n_1;x_2,n_2)\nonumber\\
&&\qquad=
-1_{n_2<n_1}1_{x_2\le x_1}\frac{(x_1-x_2+1)_{n_1-n_2-1}}{(n_1-n_2-1)!} +\frac{(N-n_1)!}{(N-n_2-1)!}
\nonumber
\\[-8pt]
\\[-8pt]
\nonumber
&&\qquad\quad{} \times \frac1{(2\pi\i)^{2}} \oint_{\Ga(x_2)}\,dz
\oint_{\mathfrak{c}(\infty)}\,dw \frac{(z-x_2+1)_{N-n_2-1}}{(w-x_1)_{N-n_1+1}} \frac{1}{w-z} \\
&&\qquad\quad{} \times\prod
_{i=1}^{k}
\frac{(A_i+1/2-w)_{B_i-A_i}}{(A_i+1/2-z)_{B_i-A_i}}.\nonumber
\end{eqnarray}
The contours in $z$ and $w$ are positively (counter-clockwise) oriented
and do not intersect. The contour $\Ga(x_2)$ in $z$ encircles the
integer points $x_2,x_2+1,\ldots,B_k-\frac12$ and only them (i.e.,
does not contain $x_2-1,x_2-2,\ldots$ and $B_k+\frac12,B_k+\frac
32,\ldots$). The contour $\mathfrak{c}(\infty)$ in $w$ contains $\Ga
(x_2)$ and
all the points $x_1,x_1-1,\ldots,x_1-(N-n_1)$.
\end{theorem}
The above explicit formula for the kernel $K$ is our main tool in the
present paper.

% subsection determinantal_correlation_kernel (end)

%s2.3 #&#
\subsection{Inverse Kasteleyn matrix} % (fold)
\label{subinversekasteleynmatrix}

Here let us recall the connection [\citet{Petrov2012}, Section~6]
between the above kernel $K$ (Theorem \ref{thmK}) and the Kasteleyn
matrix of the honeycomb graph $G_{\Pc}$ inside our polygon $\Pc$
(Figure~\ref{figGFFtiling}, right). We will use it to write down the
joint distribution of three types of lozenges \lozv, \lozs\ and
\lozl\ in Section~\ref{subextensionofkandjointdistributionofthreetypesoflozenges} below.

The honeycomb graph $G_\Pc$ is bipartite; its vertices correspond to
two types of (triangle) faces in the dual triangular lattice,\vspace*{6pt}

\begin{center}
\includegraphics{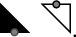}
\end{center}\vspace*{6pt}

We will encode each such triangle by the position $(x,n)$ of the
mid-point of its horizontal side. The Kasteleyn matrix of the graph
$G_{\Pc}$ is its adjacency matrix with rows and columns parametrized by
white and black triangles, respectively; for example, see \citet
{Kenyon2007Lecture}. Inside the polygon, this matrix looks as
%
%e14 #&#
\begin{eqnarray}
\label{Kasteleynmatrix} \Kast\bigl(\wt(x,n);\bt(y,m)\bigr)= %
\cases{
1,&\quad$\mbox{if $(y,m)=(x,n)$}$;\vspace*{2pt}
\cr
1,&\quad$\mbox{if $(y,m)=(x,n-1)$}$;
\vspace*{2pt}
\cr
1,&\quad$\mbox{if $(y,m)=(x+1,n-1)$}$;\vspace*{2pt}
\cr
0,&\quad$
\mbox{otherwise}$;} %
\end{eqnarray}
see Figure~\ref{figlozengestriangles}.
%
%f8 #&#
\begin{figure}

\includegraphics{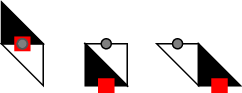}

\caption{Edges of three directions in the graph $G_\Pc$ encoded by
pairs of triangles.}
\label{figlozengestriangles}
\end{figure}
For $\wt(x,n)$ on the boundary of the graph $G_\Pc$, the $\wt(x,n)$th
row of $\Kast$ will contain less than three ones, and the same for the
$\bt(y,m)$th column.

It is known that the determinant $\det[\Kast(\wt(x,n);\bt(y,m)]$, where
$\wt(x,n)$ and $\bt(y,m)$ run over all possible white and black
triangles in $G_{\Pc}$, is equal to the total number of lozenge tilings
of the polygon $\Pc$. As [\citet{Kenyon2007Lecture}, Corollary~3]
suggests, $\Kast^{-1}$ can serve as a correlation kernel for the
uniform measure on tilings of $\Pc$. A more precise statement is as follows:

%th2.3 #&#
\begin{theorem}[{[\citet{Petrov2012}]}]
\label{thmKasteleyn}
The inverse Kasteleyn matrix and the correlation kernel $K$ of Theorem
\ref{thmK} are related as follows [for all possible values of $(x,n)$
and $(y,m)$]:
\[
\Kast^{-1}\bigl(\bt(y,m);\wt(x,n)\bigr)=(-1)^{y-x+m-n}K(x,n;y,m).
\]
\end{theorem}

% subsection inverse_kasteleyn_matrix (end)

%s2.4 #&#
\subsection{Extension of $K$ and joint distribution of three types of
lozenges} % (fold)
\label{subextensionofkandjointdistributionofthreetypesoflozenges}

Here we compute probabilities that a random tiling has lozenges of
prescribed types at prescribed positions (i.e., the joint distribution
of all types of lozenges). These probabilities are given by
determinants similar to the correlation functions of interlacing
particle arrays (Definition \ref{defcorrelationfunctions}), but with
an extended kernel.

Let, by agreement, the position of every lozenge be encoded by the
position of its white triangle (i.e., by the position of the circle dot
on Figure~\ref{figlozengestriangles}). We introduce the following
extended kernel (here and below $\te_j\in\{\lozv,\lozs,\lozl\}$ are
types of lozenges):
%
%e15 #&#
\begin{equation}
\label{extendedkernel} \qquad K_\te(x_1,n_1,
\te_1;x_2,n_2,\te_2):=
\cases{K(x_1,n_1;x_2,n_2),&\quad $
\mbox{if $\te_2=\lozv$}$;\vspace*{2pt}
\cr
-K(x_1,n_1;x_2,n_2-1),&\quad $
\mbox{if $\te_2=\lozs$}$;\vspace*{2pt}
\cr
K(x_1,n_1;x_2+1,n_2-1),&\quad $
\mbox{if $\te_2=\lozl$}$.}\hspace*{-5pt} %
\end{equation}

%
%pr2.4 #&#
\begin{proposition}\label{propKte}
For any collection of lozenges of types $\te_1,\ldots,\te_\spt\in\{
\lozv,\lozs,\lozl\}$ at (pairwise distinct) positions $(x_i,n_i)$,
$i=1,\ldots,\spt$, we have
\begin{eqnarray*}
&&\Pp_{\Pc} \bigl(\mbox{There is a lozenge of type  $
\te_r$ at $(x_r,n_r)$ for all $r=1,
\ldots,\spt$} \bigr)
\\
& &\qquad =\det\bigl[K_\te(x_i,n_i,\te
_i;x_j,n_j,\te_j)
\bigr]_{i,j=1}^{\spt}.
\end{eqnarray*}
\end{proposition}

\begin{pf}
This is a direct consequence of the above Theorem \ref{thmKasteleyn}
and also of \citet{Kenyon2007Lecture}, Corollary 3. Namely:
\begin{itemize}
\item there is a lozenge of type $\lozv$ at $(x,n)$ if and only if the
dimer covering contains the edge $ (\bt(x,n);\wt(x,n) )$;
\item there is a lozenge of type $\lozs$ at $(x,n)$ if and only if the
dimer covering contains the edge $ (\bt(x,n-1);\wt(x,n) )$;
\item there is a lozenge of type $\lozl$ at $(x,n)$ if and only if the
dimer covering contains the edge $ (\bt(x+1,n-1);\wt(x,n) )$.
\end{itemize}
Then, using \citet{Kenyon2007Lecture}, Corollary 3, we can write the
probability
\[
\Pp_\Pc \bigl(\mbox{There is a lozenge of type $\te_r$ at $(x_r,n_r)$ for all $r=1,
\ldots,\spt$} \bigr)
\]
as a determinant of a suitable matrix with entries $\Kast^{-1}$. In
this matrix there will be three types of rows (recall that they are
indexed by black triangles) corresponding to different types of
lozenges as above. Then, using the relation between $\Kast^{-1}$ and
the kernel $K$ (Theorem \ref{thmKasteleyn}), we complete the proof.
\end{pf}
A similar property for a different tiling model (of an infinite region)
was obtained in \citet{Ferrari2008}, Theorem 5.2; see also \citet
{borodin-gr2009q}, Section~7.2.

% section determinantal_structure_of_random_tilings (end)

%s3 #&#
\section{Height function and its multipoint fluctuations} % (fold)
\label{secheightfunctionanditsmultipointfluctuations}

In this section we discuss the concept of a height function of a
tiling. In Section~\ref{subexpectationofaproductofhorizontalandverticalsums}, for our
model of uniformly random tilings, we express the multipoint moments of
fluctuations of the height function through the correlation kernel $K$
of Theorem~\ref{thmK}.

%s3.1 #&#
\subsection{Definition of the height function} % (fold)
\label{subdefinitionoftheheightfunction}

% subsection definition_of_the_height_function (end)

Let $\Pc$ be a polygon from our class (Section~\ref{subaffinetransformandtheclassofpolygons}). Fix a tiling of $\Pc
$. It is possible to define the \emph{height function} of this tiling
which at every position $(x,n)\in\Pc$ is equal to
%
%e16 #&#
\begin{equation}
\label{heightfunctiondefinition} \quad h(x,n):=\sum_{m\dvtx m\le n}
1\bigl\{\mbox{there is a lozenge of type $\lozv$ or $\lozs$ at $(x,m)$}\bigr\}.
\end{equation}
Clearly, this implies that the height function is constant on each
horizontal plateau consisting of lozenges of type $\lozl$; see
Figure~\ref{figGFFheight}.

With every tiling one can associate three families of nonintersecting
lattice paths; for example, see \citet{Petrov2012}, Section~2.5.
%
%f9 #&#
\begin{figure}[t]

\includegraphics{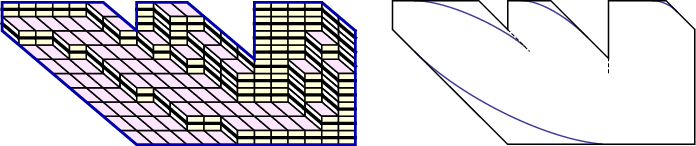}

\caption{``Level lines'' of the height function (left) and the
corresponding parts of the frozen boundary (right).}
\label{figpathHS}
\end{figure}
%f10 #&#
\begin{figure}[b]

\includegraphics{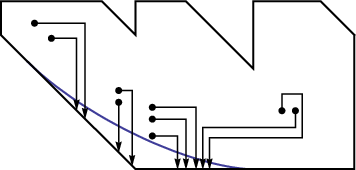}

\caption{Paths to the frozen boundary along which we calculate the
height function at $([\chi_iN],[\eta_iN])$, $i=1,\ldots,\spt$.}
\label{figGFFpaths}
\end{figure}
Nonintersecting paths in one of these families shown in Figure~\ref{figpathHS} (left) can serve as ``level lines'' of the height
function. Namely, $h(x,n)$ at a given point is equal to the number of
these nonintersecting paths lying between $(x,n)$ and the line $n=0$.
See also Figure~\ref{figpolygontilingparticles} where the tiling is
extended so that formula (\ref{heightfunctiondefinition}) and the
interpretation with the ``level lines'' make full sense.

%re3.1 #&#
\begin{remark}
The two other families of nonintersecting paths [\citet{Petrov2012},
Section~2.5] give two other possible ways to define the height
function; see also \citet{Kenyon2007Lecture}, Section~2.8 for a more
detailed discussion.
\end{remark}

%s3.2 #&#
\subsection{Paths to the boundary} % (fold)
\label{subpathstotheboundary}

Now let $\Pc(N)$ be a sequence of polygons scaled as explained in
Section~\ref{sublimitshape}. We would like to study asymptotics of
multipoint fluctuations of the random height function $h_{\Pc(N)}$
[corresponding to the uniformly random tiling of $\Pc(N)$] around
global positions $(\chi_1,\eta_1),\ldots,\break (\chi_\spt,\eta_{\spt
})\in\D$.
Here $\D$ is the liquid region (Figure~\ref{figfrozenboundary}).
Namely, we are interested in the asymptotic behavior of the
expectations entering (\ref{GFFcorrelations}),
%
%e17 #&#
\begin{equation}
\label{EHNpathstoboundary} \E \bigl( H_N(\chi_1,
\eta_1 )\cdots H_N(\chi_\spt,
\eta_\spt) \bigr),
\end{equation}
where $H_N(\chi_j,\eta_j)$'s are the fluctuations of the height
function defined in (\ref{HNdef}).

To compute values of the height function entering (\ref
{EHNpathstoboundary}), one could use formula~(\ref
{heightfunctiondefinition}). But then several indicators
corresponding to the same point $(x,m)$ in~(\ref
{heightfunctiondefinition}) will enter the resulting expression. This
will lead to certain technical complications which we can easily avoid.
Namely, let us choose $\spt$ piecewise-linear simple paths with
horizontal and vertical pieces such that each $i$th path connects
$([\chi_i N],[\eta_i N])$ and the lower left part of the frozen boundary
as in Figure~\ref{figGFFpaths}. We choose this part of $\partial\D$
because on the corresponding facet the height function will be
asymptotically equal to zero; cf. Figure~\ref{figGFFheight}. By
agreement, we assume that while crossing the frozen boundary, each path
goes in a vertical direction and proceeds vertically down until it hits
the boundary of the polygon. By taking $N$ larger if necessary,\vadjust{\goodbreak} we
require that these $\spt$ paths do not intersect.\footnote{Except for
the case when some of the points $(\chi_i,\eta_i)$ coincide; then we
still do not allow intersections away from the starting points $([\chi
_i N],[\eta_i N])$.} We also require the number of piecewise-linear
segments in each path to be bounded (uniformly in $N$).

Such paths can be constructed using the diffeomorphism $\om\dvtx\D
\to
\Hb$ (see Section~\ref{subcomplexstructureonthelimitshapesurface} and Section~\ref{subcriticalpointsoftheaction} below) which maps the frozen
boundary to the real line. It is not hard to show that in $\Hb$ the
desired (continuous) nonintersecting paths exist. Then in $\D$ we can
approximate the images of these paths under $\om^{-1}$ by
piecewise-linear paths. Since the number of points $(\chi_i,\eta_i)$
(and paths) is finite, we can also make sure that the number of
segments in these paths is bounded uniformly in $N$.

%re3.2 #&#
\begin{remark}
We could also use paths ending at any part of the frozen boundary in
Figure~\ref{figpathHS} (right) because on the corresponding facets
our height function asymptotically becomes constant, and we subtract
this constant in the definition of $H_N$ (\ref{HNdef}). However, we
use only paths as oi Figure~\ref{figGFFpaths} to simplify the
notation in Section~\ref{subestimatesofthekernelclosetotheedge} below.
\end{remark}

%de3.3 #&#
\begin{definition}
For integers $x<x'$ and $n$, denote
%
%e18 #&#
\begin{equation}
\label{horsum} \hor_{x,x'}(n):=\sum_{y=x+1}^{x'}
1\bigl\{\mbox{there is a lozenge of type $\lozv$ at $(y,n)$}\bigr\}.
\end{equation}
Also, for integers $n<n'$ and $y$, set
%
%e19 #&#
\begin{equation}
\label{versum} \quad\ver_{n,n'}(y):=\sum_{m=n+1}^{n'}
1\bigl\{\mbox{there is a lozenge of type $\lozv$ or $\lozs$ at $(y,m)$}\bigr\}.
\end{equation}
\end{definition}

Above we have explained how each individual fluctuation $H_N(\chi
_i,\eta
_i)$, $i=1,\ldots,\spt$, can be written as a finite linear combination
(with coefficients $\pm1$) of expressions of the form $\hor
_{x,x'}(n)-\E\hor_{x,x'}(n)$ and $\ver_{n,n'}(y)-\E\ver
_{n,n'}(y)$.\footnote{Note that the height function vanishes at the end
of each path in Figure~\ref{figGFFpaths}.} Moreover, if all the
points $(\chi_i,\eta_i)$ are distinct, then each indicator entering any
of the sums~(\ref{horsum}) and (\ref{versum}) corresponding to
$(\chi
_i,\eta_i)$, $i=1,\ldots,\spt$, appears only once because our paths to
the boundary do not intersect.

% subsection paths_to_the_boundary (end)

%s3.3 #&#
\subsection{Expectation of a product of horizontal and vertical sums} %
%(fold)
\label{subexpectationofaproductofhorizontalandverticalsums}

Let all the points $(\chi_i,\eta_i)$, $i=1,\ldots,\spt$, be distinct.
From the discussion of Section~\ref{subpathstotheboundary}, it
follows that our expectation (\ref{EHNpathstoboundary}) can be
expressed as a linear combination (with coefficients $\pm1$) of terms
of the form
%
%e20 #&#
\begin{equation}
\label{horverproduct} \E \Biggl( \prod_{i=1}^{\rpt}
\bigl( \hor_{x_i,x_i'}(n_i)-\E\hor_{x_i,x_i'}(n_i)
\bigr) \prod_{j=\rpt+1}^{\spt} \bigl(
\ver_{n_j,n_j'}(x_j)-\E\ver_{n_j,n_j'}(x_j)
\bigr) \Biggr)
\end{equation}
such that the following horizontal and vertical segments,
\begin{eqnarray*}
&\displaystyle \bigl\{(y,n_i)\dvtx y=x_i+1,\ldots,x_i'
\bigr\},\qquad i=1,\ldots,\rpt;&
\\
&\displaystyle \bigl\{(x_j,m)\dvtx m=n_j+1,\ldots,n_j'
\bigr\},\qquad j=\rpt+1,\ldots,\spt,&
\end{eqnarray*}
do not intersect. Here and below we assume that $x_i<x_i'$, $n_j<n_j'$
for all $i,j$.

%pr3.4 #&#
\begin{proposition}[{[{cf. \citet{Ferrari2008}}, Lemma 5.3]}]
\label{propsfoldsumofdeterminants}
With the above notation and assumptions, expression (\ref
{horverproduct}) can be written in the following form:
%
%e21 #&#
\begin{eqnarray}
\label{sfoldsumofdeterminants} \sum_{y_1=x_1+1}^{x_1'}
\cdots \sum_{y_\rpt=x_\rpt+1}^{x_\rpt'} \sum
_{m_{\rpt+1}=n_{\rpt+1}+1}^{n_{\rpt+1}'} \cdots \sum_{m_\spt=n_{\spt}+1}^{n_{\spt}'}
\det %
\left[\matrix{ A_{1,1}&A_{1,2}
\cr
A_{2,1}&A_{2,2}} \right].
\end{eqnarray}
The matrix blocks are given by
%
%e22 #&#
\begin{eqnarray}\label{A11blocks}
A_{1,1}&=& \bigl[(1-\delta_{ij})K(y_i,n_i;y_j,n_j)
\bigr]_{i,j=1,\ldots,\rpt},
\nonumber\\
A_{1,2}&=&\bigl[K(y_i,n_i;x_j+1,m_j-1)
\bigr]_{i=1,\ldots,\rpt; j=\rpt+1,\ldots
,\spt},
\nonumber
\\[-8pt]
\\[-8pt]
\nonumber
 A_{2,1}&=&\bigl[-K(x_i,m_i;y_j,n_j)
\bigr]_{i=\rpt+1,\ldots,\spt; j=1,\ldots,\rpt,
}
\\
A_{2,2}&=& \bigl[-(1-\delta_{ij})K(x_i,m_i;x_j+1,m_j-1)
\bigr]_{i,j=\rpt+1,\ldots,\spt}.
\nonumber
\end{eqnarray}
\end{proposition}

\begin{pf}
As was observed in [\citet{Kenyon2004Height}, Proof of Theorem 7.2], the
subtraction of the means in (\ref{horverproduct}) leads to vanishing
of the diagonal matrix elements in $A_{1,1}$ and $A_{2,2}$ in (\ref
{A11blocks}). Thus, it suffices to consider $\E (\prod_{i=1}^{\rpt
}\hor_{x_i,x_i'}(n_i)\times  \prod_{j=\rpt+1}^{\spt}\ver
_{n_j,n_j'}(x_j) )$.
We write every $\hor_{x_i,x_i'}(n_i)$ and $\ver_{n_j,n_j'}(x_j)$ as the
corresponding sum (\ref{horsum})--(\ref{versum}). After taking the
expectation, we get an $\spt$-fold sum as in (\ref
{sfoldsumofdeterminants}) with terms
%
%e23 #&#
\begin{eqnarray}\label{PNtherearelozenges}
&&
\Pp_{\Pc(N)} \bigl(\mbox{There is a lozenge of type $\lozv$ at
$(y_i,n_i)$, $i=1,\ldots,\rpt$;}
\nonumber\\
&&\hspace*{31pt}\qquad \mbox{and of type $\lozv$ or $\lozs$ at $(x_j,m_j)$,
$j=\rpt +1,\ldots,\spt$} \bigr)
\nonumber
\\[-8pt]
\\[-8pt]
\nonumber
&&\qquad= \Pp_{\Pc(N)} \bigl(\mbox{There is a lozenge of type $\lozv$ at
$(y_i,n_i)$, $i=1,\ldots,\rpt$;}
\\
&&\hspace*{41pt}\qquad \mbox{and there is \textit{no} lozenge of type $\lozl$ at $(x_j,m_j)$,
$j=\rpt+1,\ldots,\spt$} \bigr).\hspace*{-15pt}
\nonumber
\end{eqnarray}
The latter probability can be expressed as an $\spt\times\spt$
determinant which has the structure
\begin{eqnarray*}
&&\det %
\left[\matrix{ K^\Delta_{\te}(y_i,n_i,
\lozv;y_{i'},n_{i'},\lozv)\vspace*{2pt}\cr
K^\Delta_{\te}(x_j,m_j,
\lozl;y_{i'},n_{i'},\lozv)}\right.\\
&&\hspace*{20pt}\left.\matrix{
 K^\Delta_{\te}(y_i,n_i,
\lozv;x_{j'},m_{j'},\lozl)\vspace*{2pt}
\cr
 K^\Delta_{\te}(x_j,m_j,
\lozl;x_{j'},m_{j'},\lozl) }\right] %
_{i,i'=1,\ldots,\rpt; j,j'=\rpt+1,\ldots,\spt}.
\end{eqnarray*}
Here $K_{\te}^{\Delta}$ is the kernel which is obtained from $K_{\te}$
(\ref{extendedkernel}) via a particle-hole involution [e.g., see
\citet
{Borodin2000b}, Appendix A.3] at positions $\{(x_j,m_j)\}_{j=\rpt
+1}^{\spt}$. That kernel $K_{\te}^{\Delta}$ looks as follows (see also
Proposition \ref{propKte}):
\begin{eqnarray*}
K^\Delta_{\te}(y_i,n_i,
\lozv;y_{i'},n_{i'},\lozv) &=&K(y_i,n_i;y_{i'},n_{i'});
\\
K^\Delta_{\te}(y_i,n_i,
\lozv;x_{j'},m_{j'},\lozl)&=& K_{\te}(y_i,n_i,
\lozv;x_{j'},m_{j'},\lozl)
\\
&=& K(y_i,n_i;x_{j'}+1,m_{j'}-1);
\\
K^\Delta_{\te}(x_j,m_j,
\lozl;y_{i'},n_{i'},\lozv)&=& - K_{\te}(x_j,m_j,
\lozl;y_{i'},n_{i'},\lozv)\\
&=& - K(x_j,m_j;y_{i'},n_{i'});
\\
K^\Delta_{\te}(x_j,m_j,
\lozl;x_{j'},m_{j'},\lozl) &=&\delta_{j,j'}-K_{\te}(x_j,m_j,
\lozl;x_{j'},m_{j'},\lozl)
\\
&= &\delta_{j,j'}-K(x_j,m_j;x_{j'}+1,m_{j'}-1).
\end{eqnarray*}
Then, setting the diagonal matrix elements to zero, we obtain matrix
blocks (\ref{A11blocks}). This completes the proof.
\end{pf}

%re3.5 #&#
\begin{remark}
Representation of Proposition \ref{propsfoldsumofdeterminants} is
not valid if some of the points $(\chi_j,\eta_j)$ coincide because then
we cannot write all the probabilities (\ref{PNtherearelozenges}) as
$\spt\times\spt$ determinants. In this case, in the asymptotic analysis
of multipoint fluctuations (\ref{EHNpathstoboundary}) we employ
Lemma \ref{lemmaN^epsilonbound} below.
\end{remark}

% subsection expectation_of_a_product_of_horizontal_and_vertical_sums
%(end)

% section height_function_and_its_multipoint_fluctuations (end)

%s4 #&#
\section{Asymptotics of the kernel} % (fold)
\label{secasymptoticsofthekernel}

In this section we investigate asymptotic properties of the correlation
kernel of Theorem \ref{thmK} in various regimes.

%s4.1 #&#
\subsection{Action \texorpdfstring{$S(w;\chi,\eta)$}{$S(w;chi,eta)$}} % (fold)
\label{subaction}

The polygon $\Pc(N)$ is assumed to be scaled as in Section~\ref{sublimitshape}. We will be interested in asymptotics of the kernel
$K(x_1,n_1;x_2,n_2)$ when the two points $(x_1,n_1)$ and $(x_2,n_2)$
behave as
%
%e24 #&#
\begin{equation}
\label{chi1chi2regime} \frac{x_j}{N}\to\chi_j,\qquad
\frac{n_j}{N}\to\eta_j,\qquad j=1,2,
\end{equation}
where $(\chi_1,\eta_1)$ and $(\chi_2,\eta_2)$ are two (not necessarily
distinct) global positions inside the limiting polygon $\Pl$; see
Section~\ref{sublimitshape} and especially Figure~\ref{figfrozenboundary}.% In particular, most of the time we will
%consider the asymptotic regime
% \begin{eqnarray}\label{chi1chi2regimemain}
% x_j=[\chi_j N], n_j=[\eta_j N], j=1,2
% \end{eqnarray}
% which is motivated by the asymptotics we describe in \S

%de4.1 #&#
\begin{definition}[{[{\citet{Petrov2012}}, Section~7.2]}]\label{defaction}
Define the \emph{action} by
%
%e25 #&#
\begin{eqnarray}\label{action}
&&S(w;\chi,\eta)\nonumber\\
&&\qquad:=(w-\chi)\ln(w-\chi) -(w-\chi+1-\eta)\ln(w-\chi+1-\eta)
\\
&&\qquad\quad{}
+(1-\eta)\ln(1-\eta)+\sum_{i=1}^{k}
\bigl[(b_i-w)\ln(b_i-w)-(a_i-w)
\ln(a_i-w) \bigr].\nonumber
\end{eqnarray}
Unless otherwise stated, we assume that that the branches of all
logarithms have cuts looking in negative direction along the real line.
Note that the real part $\Re S(w;\chi,\eta)$ is well defined and
continuous for all $w\in\C$.
\end{definition}

Denote also
\[
\Xi(w;\chi,\eta):=\frac{(w-\chi)(w-\chi+1-\eta)}{1-\eta}
\]
and
%
%e26 #&#
\begin{equation}
\label{Xi12S12}\qquad \Xi_j(w):=\Xi\biggl(w;\frac{x_j}N,
\frac{n_j}{N}\biggr), \qquad S_j(w):=S\biggl(w;\frac{x_j}N,
\frac{n_j}{N}\biggr),\qquad j=1,2.
\end{equation}

%pr4.2 #&#
\begin{proposition}\label{propKfirstasymptotics}
In regime (\ref{chi1chi2regime}), the kernel $K(x_1,n_1;x_2,n_2)$
of Theorem~\ref{thmK} has the following asymptotics:
%
%e27 #&#
\begin{eqnarray}
\label{Kfirstasymptotics}
&&K(x_1,n_1;x_2,n_2)\nonumber\\
&&\qquad=
-1_{n_2<n_1} \biggl(1+O\biggl(\frac{1}{N}\biggr) \biggr)
\frac{1}{2\pi\i} \oint_{\Ga(\chi_2-)} \frac{\exp \{N (S_1(z)-S_2(z) ) \}} {
\sqrt{\Xi_1(z)\Xi_2(z)}}\,dz
\nonumber
\\[-8pt]
\\[-8pt]
\nonumber
&&\qquad\quad{} + \biggl(1+O\biggl(\frac{1}{N}\biggr) \biggr)\\
&&\qquad\quad{}\times \frac{1}{(2\pi\i)^{2}}
\oint_{\Ga(\chi_2-)}\,dz \oint_{\mathfrak{c}(\infty)}\,dw \frac{1}{w-z}
\frac{\exp \{N (S_1(w)-S_2(z) ) \}} {
\sqrt{\Xi_1(w)\Xi_2(z)}}.\nonumber
\end{eqnarray}
Here $z$ in both single and double integrals runs over a
counter-clockwise contour which crosses the real line just to the left
of $\chi_2$, and also to the right of $b_k\sim\frac{B_K}{N}$; see~(\ref
{scaleAiBi}). The $w$ contour is counter-clockwise, contains $\Ga
(\chi
_2-)$ (without intersecting it) and is sufficiently large.
\end{proposition}
When $(\chi_1,\eta_1)=(\chi_2,\eta_2)$, this essentially coincides with
\citet{Petrov2012}, Proposition 7.2.

\begin{pf*}{Proof of Proposition \ref{propKfirstasymptotics}}
Let us adapt the double contour integral in formula (\ref{Kformula})
to the asymptotic regime (\ref{chi1chi2regime}) by scaling the
variables as $\tilde z=z/N$, $\tilde w=w/N$ (and then renaming back to $z,w$),
\begin{eqnarray*}
 K(x_1,n_1;x_2,n_2)&=&
-1_{n_2<n_1}1_{x_2\le x_1}\frac{(x_1-x_2+1)_{n_1-n_2-1}}{(n_1-n_2-1)!} +\frac{N(N-n_1)!}{(N-n_2-1)!}
\\
&&{}\times \frac1{(2\pi\i)^{2}} \oint_{\Ga(\chi_2-)}\,dz
\oint_{\mathfrak{c}(\infty)}\,dw \frac{(Nz-x_2+1)_{N-n_2-1}}{(Nw-x_1)_{N-n_1+1}} \frac{1}{w-z}  \\
&&{}\times\prod
_{i=1}^{k}\frac{(A_i+1/2-Nw)_{B_i-A_i}}{(A_i+1/2-Nz)_{B_i-A_i}}.
\end{eqnarray*}
Here $z$ and $w$ run over the corresponding scaled contours, and they
can be chosen independently of $N$.\footnote{We may drag the $z$
contour slightly to the left of $\chi_2$ because the integrand has
zeroes in $z$ which allow that; and also drag it to the right of $b_k$
because the integrand in (\ref{Kformula}) does not have $z$ poles to
the right of $B_k-\frac12$.} These contours coincide with the ones in
the claim (\ref{Kfirstasymptotics}).

Expressing all the Pochhammer symbols in the integrand above through
the Gamma function and applying the Stirling approximation, we may
write for nonreal $z,w$ [see \citet{Petrov2012}, Section~7.2 for more detail],
\begin{eqnarray*}
&&\frac{1}{w-z}\frac{N(N-n_1)!}{(N-n_2-1)!} \frac{(Nz-x_2+1)_{N-n_2-1}}{(Nw-x_1)_{N-n_1+1}} \prod
_{i=1}^{k} \frac{(A_i+1/2-Nw)_{B_i-A_i}}{(A_i+1/2-Nz)_{B_i-A_i}}
\\
&&\qquad = \biggl(1+O\biggl(\frac{1}{N}\biggr) \biggr) \frac{1}{w-z}
\frac{1}{\sqrt{\Xi_1(w)\Xi_2(z)}} \exp \bigl\{N \bigl(S_1(w)-S_2(z)
\bigr) \bigr\}.
\end{eqnarray*}

As for the additional summand, using Lemma 6.2 in \citet{Petrov2012},
we write
\begin{eqnarray}\label{lemma62v1}
&&  -1_{n_2<n_1}1_{x_2\le x_1}\frac{(x_1-x_2+1)_{n_1-n_2-1}}{(n_1-n_2-1)!}
\nonumber
\\[-8pt]
\\[-8pt]
\nonumber
&&\qquad= -1_{n_2<n_1} \frac{(N-n_1)!}{(N-n_2-1)!} \times\frac{1}{2\pi\i}
\oint_{\Ga(x_2)} \frac{(z-x_2+1)_{N-n_2-1}}{(z-x_1)_{N-n_1+1}}\,dz.
\nonumber
\end{eqnarray}
Then, scaling the $z$ variable as above for the double integral
($\tilde z=z/N$), and using the fact that
\[
\frac{N(N-n_1)!}{(N-n_2-1)!} \frac{(Nz-x_2+1)_{N-n_2-1}}{(Nz-x_1)_{N-n_1+1}} = \biggl(1+O\biggl(\frac{1}{N}
\biggr) \biggr) \frac{\exp \{N (S_1(z)-S_2(z) ) \}} {
\sqrt{\Xi_1(z)\Xi_2(z)}},
\]
we complete the proof.
\end{pf*}

% subsection action_ (end)

%s4.2 #&#
\subsection{Critical points of the action} % (fold)
\label{subcriticalpointsoftheaction}

Proposition \ref{propKfirstasymptotics} suggests to use the saddle
point (steepest descent) approach [e.g., see \citet{Okounkov2002},
Section~3] to investigate the asymptotics of the correlation kernel
$K(x_1,n_1;x_2,n_2)$. The first step is to understand critical points
of the action, that is, points where $S'(w;\chi,\eta):=\frac
{\partial
}{\partial w}S(w;\chi,\eta)=0$. Let us recall the results about
critical points obtained in \citet{Petrov2012}:

\begin{longlist}[(1)]
\item[(1)] Depending on the global position $(\chi,\eta)$ inside the
limiting polygon $\Pl$, there are either 0 or 1 critical points of the
action in the (open) upper half plane $\Hb$.

\item[(2)] Points $(\chi,\eta)\in\Pl$ for which there exists a nonreal
critical point [denote it by $\om(\chi,\eta)$] constitute the (open)
liquid region $\D\subset\Pl$ where asymptotically one sees all three
types of lozenges; see Remark \ref{rmkbulkreference}.

\item[(3)]As a function of the global position $(\chi,\eta)\in\D$,
$\om
(\chi,\eta)$ satisfies the algebraic equation (\ref
{omegaalgebraicequation}) and a form of the complex Burgers equation
(\ref{omegacomplexBurgersequation}).

\item[(4)] When $(\chi,\eta)\in\D$ approaches the frozen boundary curve
$\partial\D$ (which separates the liquid region from frozen facets),
the critical point $\om(\chi,\eta)\in\Hb$ merges with its complex
conjugate $\omb(\chi,\eta)$. In addition, points $(\chi,\eta)\in
\partial
\D$ that are cusps ($={}$turning points), or points where $\partial\D$ is
tangent to a side of the polygon (see Figure~\ref{figfrozenboundary})
correspond to certain more special types of merging of the critical
points $\om(\chi,\eta)$ and $\omb(\chi,\eta)$, which we do not
need to
address in our analysis.\footnote{See the explanation in the proof of
Lemma \ref{lemmaedge1} that one can choose paths in Figure~\ref{figGFFpaths} away from such more special points.}

Thus, for all $(\chi,\eta)\in\partial\D$, the action $S(w;\chi,\eta)$
has (at least) double critical point $\om(\chi,\eta)\in\R$ which
can be
taken as a real parameter on the frozen boundary curve. The map $\om
^{-1}\dvtx\R\to\partial\D$ is one-to-one and rational; see \citet
{Petrov2012}, Proposition 2.6.
\end{longlist}

%pr4.3 #&#
\begin{proposition}
The map $\om\dvtx\D\to\Hb$, $(\chi,\eta)\mapsto\om(\chi,\eta
)$, is a
diffeomorphism.
\end{proposition}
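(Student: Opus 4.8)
The plan is to verify that $\om\colon\D\to\Hb$ is (i) well-defined and smooth, (ii) a local diffeomorphism, (iii) injective, and (iv) surjective, and then invoke the standard fact that a smooth injective local diffeomorphism onto its image is a diffeomorphism onto that image, supplemented by the surjectivity claim. Smoothness of $\om$ is immediate from items \textbf{1.}--\textbf{3.} above: $\om(\chi,\eta)$ is the unique root in $\Hb$ of the algebraic equation (\ref{omega_algebraic_equation}), and by the implicit function theorem it depends smoothly on $(\chi,\eta)$ as long as $\partial_w S'(\om;\chi,\eta)=S''(\om;\chi,\eta)\ne 0$; but a degenerate critical point would force $\om$ to coincide with $\omb$, which by item \textbf{4.} happens only on $\partial\D$, hence not in the open liquid region $\D$. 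So $\om$ is smooth on $\D$.

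For the local-diffeomorphism property I would compute the Jacobian of $(\chi,\eta)\mapsto\om$ and show it is nonvanishing on $\D$. Differentiating the identity $S'(\om(\chi,\eta);\chi,\eta)\equiv 0$ in $\chi$ and in $\eta$ gives $\partial_\chi\om=-\partial_\chi S'/S''$ and $\partial_\eta\om=-\partial_\eta S'/S''$; since $\om$ is nonreal and $S''(\om)\ne 0$, it suffices to check that $\partial_\chi S'(\om;\chi,\eta)$ and $\partial_\eta S'(\om;\chi,\eta)$ are not real multiples of one another, equivalently that the real $2\times2$ Jacobian matrix of the $\R^2$-valued map $(\chi,\eta)\mapsto(\Re\om,\Im\om)$ has nonzero determinant. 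Here the complex Burgers equation (\ref{omega_complex_Burgers_equation}) is the key input: it relates $\partial_\chi\om$ and $\partial_\eta\om$ by the nonreal factor $\frac{\om-\chi}{1-\eta}$ (nonreal because $\om\notin\R$ while $\chi,\eta$ are real and $\eta<1$ in the strip), and multiplication of a nonzero complex number by a nonreal complex number always produces an $\R$-linearly independent vector. Hence the Jacobian is nonsingular and $\om$ is a local diffeomorphism from $\D$ into $\Hb$.

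Injectivity is where I expect the real work to lie. One natural route: both $\D$ and $\Hb$ are connected, $\om$ is an open map (being a local diffeomorphism), and by item \textbf{4.} combined with item \textbf{4.}'s statement that $\om^{-1}\colon\R\to\partial\D$ is a one-to-one rational parametrization of the frozen boundary, the map $\om$ extends continuously to $\overline{\D}$ sending $\partial\D$ homeomorphically onto $\R\cup\{\infty\}$ (with cusps and tangency points mapping to finitely many exceptional real points, but still bijectively as a set map on the boundary). A degree/proper-map argument then finishes: a local diffeomorphism $\overline{\D}\to\overline{\Hb}$ that restricts to a homeomorphism on the boundary is a homeomorphism (its topological degree is $\pm1$, and local injectivity plus degree $\pm1$ forces global injectivity), and in particular $\om$ maps $\D$ bijectively onto $\Hb$. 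Surjectivity onto $\Hb$ also drops out of this, or alternatively from items \textbf{1.}--\textbf{2.}: every point of $\Hb$ is the critical point $\om(\chi,\eta)$ for \emph{some} $(\chi,\eta)$, since given $z\in\Hb$ one can solve the two real equations $\Re S'(z;\chi,\eta)=0$, $\Im S'(z;\chi,\eta)=0$ for $(\chi,\eta)$ — indeed (\ref{omega_algebraic_equation}) rewritten determines the ratio and product of $(z-\chi)$ and $(z-\chi+1-\eta)$ uniquely, hence $\chi$ and $\eta$ uniquely, and one checks the resulting $(\chi,\eta)$ lies in $\Pl$ and therefore in $\D$. Combining: $\om$ is a smooth bijection which is a local diffeomorphism, hence a diffeomorphism $\D\to\Hb$. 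The main obstacle is making the boundary-behavior/degree argument for injectivity fully rigorous without circular appeal to the very statement being proved; if that proves awkward, the cleaner fallback is to construct $\om^{-1}$ explicitly from (\ref{omega_algebraic_equation}) as sketched in the surjectivity step and check directly that it is smooth and two-sided inverse to $\om$.
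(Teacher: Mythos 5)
Your proposal is correct in outline but takes a more roundabout route than the paper. The paper does not separate injectivity and surjectivity: it simply \emph{constructs} $\om^{-1}$ in one stroke by solving the algebraic equation (\ref{omega_algebraic_equation}) for $(\chi,\eta)$ given $z\in\Hb$. Rewriting (\ref{omega_algebraic_equation}) as a complex equation that is \emph{affine} in the two real unknowns $\chi$ and $\chi+\eta-1$, one separates real and imaginary parts and observes that the imaginary-part equation can always be solved uniquely for $\chi+\eta-1$ because $\Im\bigl(\prod_{i}\frac{z-b_i}{z-a_i}\bigr)\ne 0$ for $z\in\Hb$ --- that nonvanishing is isolated as a separate lemma and proved by an elementary geometric argument (the argument of each factor $\frac{z-b_i}{z-a_i}$ is the angle under which $[a_i,b_i]$ is seen from $z$, and the total argument lies strictly in $(0,\pi)$). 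That the resulting $(\chi,\eta)$ lies in $\D$ is inferred from the known bijection on $\partial\D$. Bijectivity of $\om$ then follows immediately, with no degree theory needed. Differentiability in both directions is handled, as you suggest, via the Burgers equation.

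So your ``fallback'' option is in fact the paper's main argument, and it is cleaner: your primary route through topological degree requires extending $\om$ continuously to $\overline{\D}$, verifying properness, and handling the cusps and tangency points on $\partial\D$ --- all of which you correctly flag as delicate and which the paper avoids entirely. Two small points of caution in your write-up: first, (\ref{omega_algebraic_equation}) determines only the \emph{ratio} of $(z-\chi)$ and $(z-\chi+1-\eta)$ (not ``ratio and product''), but since that ratio is one complex equation it already gives the two real equations needed for the two real unknowns; the missing ingredient you did not name is precisely the lemma that this complex ratio has nonzero imaginary part. Second, in the local-diffeomorphism step, the Burgers equation shows $\om_\chi$ and $\om_\eta$ are related by a nonreal factor, but you still need to rule out that both vanish simultaneously --- this follows because $\om_\chi=-S'_\chi(\om)/S''(\om)$ with $S'_\chi=\frac{1}{\om-\chi+1-\eta}-\frac{1}{\om-\chi}\ne 0$ for $\eta<1$; the paper also leaves this implicit, so it is a shared (minor) gap.
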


\begin{pf}
Finding the image of a point $z\in\Hb$ under the inverse map $\om
^{-1}$ amounts to solving the equation (\ref{omegaalgebraicequation})
%
%e28 #&#
\begin{equation}
\label{zalgeqndiffeomorphismproof} (z-\chi)\prod_{i=1}^{k}(z-a_i)=
(z-\chi+1-\eta)\prod_{i=1}^{k}(z-b_i)
\end{equation}
for $\chi$ and $\eta$. Since $z\in\Hb$ is complex and $(\chi,\eta)$
must be real, this is actually a pair of real equations. Let us first
rewrite (\ref{zalgeqndiffeomorphismproof}) as
\[
\chi=z \Biggl(1-\prod_{i=1}^{k}
\frac{z-b_i}{z-a_i} \Biggr) +(\chi+\eta-1)\prod_{i=1}^{k}
\frac{z-b_i}{z-a_i}.
\]
Since the imaginary part of $\prod_{i=1}^{k}\frac
{z-b_i}{z-a_i}$ is nonzero for $z\in\Hb$ (Lemma \ref
{lemmaimofproduct} below), one can always solve the equation
\[
\Im\chi=\Im \Biggl(z \Biggl(1-\prod_{i=1}^{k}
\frac
{z-b_i}{z-a_i} \Biggr) +(\chi+\eta-1)\prod_{i=1}^{k}
\frac{z-b_i}{z-a_i} \Biggr)=0
\]
for $\chi+\eta-1$ and thus find a solution $(\chi,\eta)$ which belongs
to $\D$ because we have a bijection on the boundary $\partial\D$. This
implies that the map $\om\dvtx\D\to\Hb$ is bijective.

The map $\om$ is differentiable, and, moreover, the partial
derivatives $\om_\chi$ and $\om_\eta$ cannot both be zero inside
$\D$
because of the complex Burgers equation (\ref
{omegacomplexBurgersequation}). One can also check that the inverse
map is differentiable. This concludes the proof.
\end{pf}

%le4.4 #&#
\begin{lemma}\label{lemmaimofproduct}
Let $a_1<b_1<\cdots<a_k<b_k$, $\sum_{i=1}^{k}(b_i-a_i)=1$, be the
parameters of the limiting polygon $\Pl$. Then
\[
\Im \Biggl(\prod_{i=1}^{k}
\frac{z-b_i}{z-a_i} \Biggr)\ne0,\qquad z\in\Hb.
\]
\end{lemma}

\begin{pf}
Observe that the argument of $\frac{z-b_i}{z-a_i}$ is the angle under
which the segment $[a_i,b_i]$ is seen from the point $z$. Thus, the
argument of the whole product $\prod_{i=1}^{k}\frac
{z-b_i}{z-a_i}$ must be strictly between $0$ and $\pi$, and so the
imaginary part of that product cannot vanish.
\end{pf}

% subsection critical_points_of_the_action (end)

%s4.3 #&#
\subsection{Moving the contours} % (fold)
\label{submovingthecontours}

Our aim in this subsection is to explain how we deform the contours in
the double integral in (\ref{Kfirstasymptotics}) to employ the saddle
point analysis.

Let us assume that (not necessarily distinct) limiting global positions
$(\chi_1,\eta_1)$ and $(\chi_2,\eta_2)$ in (\ref{chi1chi2regime})
belong to the liquid region $\D\subset\Pl$. Denote the corresponding
critical points of the action by $\om_j:=\om (\frac
{x_j}{N},\frac
{n_j}{N} )\in\Hb$, $j=1,2$.

The behavior of $S_{1,2}$ around $\om_{1,2}$ is quadratic because these
critical points are simple; see also the proof of Proposition \ref
{prop4summands}. Thus there are four curves starting from each point
$\om_{1,2}$ along which the imaginary part $\Im(S_{1,2}(w))$ is
constant; see Figure~\ref{figIm}. As the new $w$ contour we choose the
counter-clockwise closed contour passing through $\om_1$ composed of
two curves with $\Im(S_{1}(w)-S_{1}(\om_{1}))=0$ on which $\Re
(S_{1}(w)-S_{1}(\om_{1}))<0$ for $w\ne\om_1,\omb_1$ (Figure~\ref{figIm}, left). The new counter-clockwise $z$ contour must pass
through $\om_2$ and look like the one on Figure~\ref{figIm}, right, so
on it we will have $\Re(S_{2}(z)-S_{2}(\om_{2}))>0$ for $z\ne\om
_2,\omb_2$.
%
%f11 #&#
\begin{figure}[b]

\includegraphics{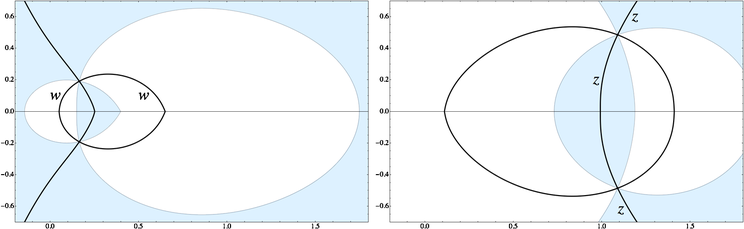}

\caption{Critical points $\om_1$ (left) and $\om_2$ (right). Along the
bold curves one has $\Im(S_{j}(\cdot)-S_{j}(\om_{j}))=0$, $j=1,2$.
Shaded are regions where $\Re(S_{j}(\cdot)-S_{j}(\om_{j}))>0$. The new
$w$ and $z$ contours are indicated on the left and on the right, respectively.}
\label{figIm}
\end{figure}

%
%pr4.5 #&#
\begin{proposition}\label{propKasymptoticsaftertransform}
The $z$ and $w$ contours in the double integral in (\ref
{Kfirstasymptotics}) can always be deformed to become the new
contours described above (indicated on Figure~\ref{figIm}). This results
in the following asymptotics of the kernel $K$ in the regime~(\ref
{chi1chi2regime}):
%
%e29 #&#
\begin{eqnarray}
\label{Kasymptoticsaftertransform} &&K(x_1,n_1;x_2,n_2)\nonumber\\
&&\qquad=
\biggl(1+O\biggl(\frac{1}{N}\biggr) \biggr)\frac{1}{2\pi\i}\int
_{\Ga_{\mathrm{single}}} \,dz \frac{\exp \{N (S_1(z)-S_2(z) ) \}} {
\sqrt{\Xi_1(z)\Xi_2(z)}}
\\
&&\qquad\quad{}+ \biggl(1+O\biggl(\frac{1}{N}\biggr) \biggr) \frac{1}{(2\pi\i)^{2}}
\oint_{\{z\}} \oint_{\{w\}} \frac{dz \,dw}{w-z} \frac{\exp \{N (S_1(w)-S_2(z) ) \}} {
\sqrt{\Xi_1(w)\Xi_2(z)}},\nonumber
\end{eqnarray}
where in the double integral $\{z\}$ and $\{w\}$ are the new deformed contours.

The single integral may or may not be present; this depends on whether
the new contours intersect, and also on the inequality between $n_1$
and $n_2$. All these cases can be unified by choosing an appropriate
contour $\Ga_{\mathrm{single}}$; see Section~\ref{subestimatingthesingleintegral}.%\footnote{In Proposition
%the single integral is negligible in comparison with that of the
%double integral.}
\end{proposition}

\begin{pf}
% The claim of this proposition may be deduced from the discussion of
%more detail as some aspects of it will be used later in \S
Let us fix any $(\chi,\eta)\in\D$, and set $\om:=\om(\chi,\eta
)$ and
$S(z):=S(z;\chi,\eta)$. As our \textit{first step}, we aim to justify
that the picture of shaded regions where $\Re(S(z)-S(\om))>0$ looks
exactly as in Figure~\ref{figIm}, and also describe the points where
the four contours $\{z\dvtx\Im S(z)=\Im S(\om)\}$ intersect the real line.

Because $\Re S(z;\chi,\eta)\sim\eta\ln|z|$ as $|z|\to\infty$, and
$0<\eta<1$ inside $\D$, far away on Figure~\ref{figIm} we see a shaded
region, that is, where $\Re(S(z)-S(\om))>0$.

Since $S(z)$ is holomorphic everywhere in $\Hb$, along each of the
four contours $\{z\dvtx\Im S(z)=\Im S(\om)\}$ (the thick curves on
Figure~\ref{figIm}) the sign of $\Re(S(z)-S(\om))$ must be constant.
This implies that each thick curve on Figure~\ref{figIm} from $\om$ to
$\omb$ must be completely inside a shaded or nonshaded region.

Now let us look at the function $\Im(S(z)-\Im S(\om))$ for $z\in\R
+\i
\varepsilon$ for fixed small $\varepsilon>0$. Observe that
\[
\Im\bigl( (t+\i\varepsilon)\ln(t+\i\varepsilon)\bigr)= \varepsilon\ln|t+\i\varepsilon|+t
\arg(t+\i\varepsilon) \sim \pi\cdot(t)_-:=\pi\cdot t 1_{t<0},
\]
where $t\in\R$, as $\varepsilon\to0+$. Thus
\[
\frac{1}{\pi}\Im S(t+\i\varepsilon)\sim(t-\chi)_{-}- (t-\chi+1-
\eta)_{-} +\sum_{j=1}^{k}
\bigl[(b_j-t)_{-}-(a_j-t)_{-}
\bigr];
\]
see Figure~\ref{figImS}. Clearly,
$\frac{\partial}{\partial t}\frac{1}{\pi}\Im S(t+\i\varepsilon
)\sim
1_{t\in[\chi+\eta-1,\chi]}-\sum_{j=1}^{k}1_{t\in[a_j,b_j]}$.
Looking at the slopes of the graph in Figure~\ref{figImS}, we see
that the four contours $\{z\dvtx\Im S(z)=\Im S(\om)\}$ can intersect
the real line in at most three points.\footnote{In fact, the case when
there are infinitely many such points (i.e., when a horizontal part of
the graph in Figure~\ref{figImS} is lying at the horizontal
coordinate line) corresponds to $(\chi,\eta)$ belonging to the frozen
boundary, when the action has a real double critical point; see Section~\ref{subcriticalpointsoftheaction}.} Because of the relation
between these contours and the shaded regions in Figure~\ref{figIm}
explained above, there are exactly three such points of intersection:

\begin{itemize}
\item$t^+\in[\chi+\eta-1,\chi]$, where $\Im S(t^+)=\Im S(\om)$ and
$\Re S(t^+)>\Re S(\om)$;
\item$t_l^-<t_r^-$, both belonging to the union of the segments
$[a_j,b_j]$, where\break $\Im S(t^-_{l,r})=\Im S(\om)$ and $\Re
S(t^-_{l,r})<\Re S(\om)$.
\end{itemize}
Moreover, from Figure~\ref{figImS} we see that $t_l^-<t^+<t_r^-$. The
fourth contour $\{z\dvtx\Im S(z)=\Im S(\om)\}$ [with $\Re S(z)>\Re
S(\om)$] runs to infinity; see Figure~\ref{figIm}.

%f12 #&#
\begin{figure}

\includegraphics{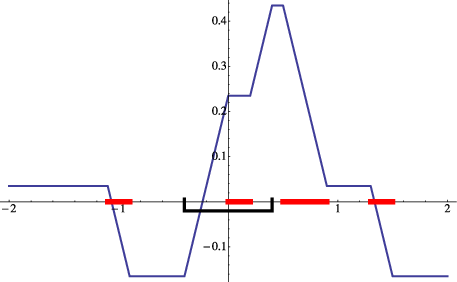}

\caption{Graph of $\frac{1}{\pi}\Im(S(t+\i\varepsilon)-S(\om))$,
$t\in\R
$, for small $\varepsilon>0$. The segments $[a_j,b_j]$ (red) and $[\chi
+\eta-1,\chi]$ (black) are displayed.}
\label{figImS}
\end{figure}

Now as a \textit{second step}, we explain how we can move the $z$ and
$w$ contours in the double contour integral in (\ref
{Kfirstasymptotics}) to get (\ref{Kasymptoticsaftertransform}).
Looking at the poles in $z$ and $w$ in the original integrand in (\ref
{Kformula}), we see that:
\begin{itemize}
\item We can drag the points of intersection of the $z$ contour with
$\R
$ (without picking any residues) everywhere except in regions where the
slope of the graph on Figure~\ref{figImS} is strictly negative.
\item The same goes for the $w$ contour: we cannot drag it through
regions where the slope of the graph on Figure~\ref{figImS} is
strictly positive.
\end{itemize}

The old $z$ and $w$ contours are described in Proposition \ref
{propKfirstasymptotics}; together with what was said above, we see
that these $z$ and $w$ contours can always be deformed in a desired
way. The new $w$ contour will intersect the real line at points
$t^{-}_{l,r}(\chi_1,\eta_1)$; the new $z$ contour---at $t^+(\chi
_2,\eta_2)$. Because the integrand in (\ref{Kformula}) is regular in
$z$ at $z=\infty$, we can let the $z$ contour pass through infinity.

In the course of this deformation no residues coming from poles on $\R
$ will be picked. However, if the new $z$ and $w$ contours intersect,
the residue at $w=z$ will be picked from the $w$ integral, and then
this residue will be integrated in $z$ over an appropriate arc.
Together with the single integral already present in (\ref
{Kfirstasymptotics}), this will lead to appearance of the single
integral in (\ref{Kasymptoticsaftertransform}). We will describe and
investigate it in Section~\ref{subestimatingthesingleintegral} below.
\end{pf}

% subsection moving_the_contours (end)

%s4.4 #&#
\subsection{Estimating the single integral} % (fold)
\label{subestimatingthesingleintegral}

The goal of this subsection is to asymptotically estimate the single
integral in (\ref{Kasymptoticsaftertransform}). A priori from the
proof of Proposition~\ref{propKasymptoticsaftertransform} we see
that the integral over $\Ga_{\mathrm{single}}$ may look as follows
[we omit the factor $ (1+O(\frac{1}{N}) )$ and the integrand
$\frac{1}{2\pi\i}\frac{\exp\{N(S_1(z)-S_2(z))\}}{\sqrt{\Xi
_1(z)\Xi_2(z)}}\,dz$]:

%
%f13 #&#
\begin{figure}[b]

\includegraphics{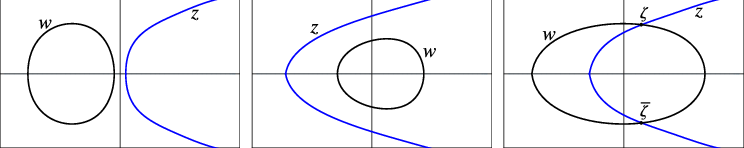}

\caption{Various possibilities for the new deformed $z$ and $w$ contours.}
\label{figzwcontours}
\end{figure}

\begin{longlist}[(a)]
\item[(a)] If the new $z$ and $w$ contours of Proposition \ref
{propKasymptoticsaftertransform} do not intersect, then the
integral has the form $1_{n_2\ge n_1}\oint_{\{z\}}$, where $\{z\}$ is
the full new $z$ contour.
\item[(b)] If the new contours intersect at points $\zeta\in\Hb$ and
$\bar
\zeta$, then the integral has the form $-1_{n_2<n_1}\int_{\Ga
_L(\zeta
)}+1_{n_2\ge n_1}\int_{\Ga_R(\zeta)}$, where $\Ga_L(\zeta)$ is the left
part of the new $z$ contour passed from $\zeta$ to $\bar\zeta$, and
$\Ga
_R(\zeta)$ is its right part passed from $\bar\zeta$ through $\infty$
to $\zeta$.
\end{longlist}
See Figure~\ref{figzwcontours} for the possible configurations of contours.

%
%re4.6 #&#
\begin{remark}
If the new $z$ and $w$ contours intersect more than once in $\Hb$,
then the contour $\Ga_{\mathrm{single}}$ would contain several parts.
However, then we always can write an estimate of the form
\[
\int_{\Ga_{\mathrm{single}}}|\cdots| \,dz\le 1_{n_2<n_1}\int
_{\Ga_L(\zeta)}|\cdots| \,dz+ 1_{n_2\ge n_1}\int_{\Ga_R(\zeta')}|
\cdots| \,dz
\]
(dots mean the integrand), where $\zeta$ and $\zeta'$ are some points
of intersection of the new contours. Below (in Lemmas \ref{lemmaCR}
and \ref{lemmaCL} and Proposition \ref
{propsingleintegralestimate}) we estimate the above two summands
separately, so we may think that the case (b) covers all possibilities
when the two contours intersect.
\end{remark}\vspace*{-3pt}

First, we deal with the case (a):\vspace*{-3pt}

%
%le4.7 #&#
\begin{lemma}\label{lemmamustintersect}
If the new $z$ and $w$ contours do not intersect, then there is in
fact no single integral in (\ref{Kasymptoticsaftertransform}).
\end{lemma}

\begin{pf}
Recall that the single integral in (\ref
{Kasymptoticsaftertransform}) in that case is asymptotically
equivalent to (see Proposition \ref{propKfirstasymptotics})
\[
1_{n_2\ge n_1}\frac{1}{2\pi\i}\oint_{\{z\}} \frac{N(N-n_1)!}{(N-n_2-1)!}
\frac{(Nz-x_2+1)_{N-n_2-1}}{(Nz-n_1+1)}\,dz.
\]
Here $\{z\}$ is the full new $z$ contour. This integral can be
explicitly computed using Lemma 6.2 in \citet{Petrov2012} [see also
(\ref
{lemma62v1})], it is equal to
\[
1_{n_2\ge n_1}1_{x_2\le x_1}\frac{(n_1-n_2)_{x_1-x_2}}{(x_1-x_2)!}.
\]
This expression is nonzero only if $x_1+n_1\le x_2+n_2$; otherwise the
Pochhammer symbol vanishes. But observe that the three inequalities
\[
n_2\ge n_1,\qquad x_2\le x_1,\qquad
x_1+n_1\le x_2+n_2
\]
in the regime (\ref{chi1chi2regime}) imply that (for large $N$) the
segment $[\chi_2+\eta_2-1,\chi_2]$ is completely inside $[\chi
_1+\eta
_1-1,\chi_1]$. From the proof of Proposition \ref
{propKasymptoticsaftertransform} it follows that the new $z$
contour crosses the real line at some point inside $[\chi_2+\eta
_2-1,\chi_2]$ (and hence inside $[\chi_1+\eta_1-1,\chi_1]$), and the
new $w$ contour passes through two real points at the opposite sides of
$[\chi_1+\eta_1-1,\chi_1]$. Thus, we see that in this situation the new
$z$ and $w$ contours must intersect. This concludes the proof.
\end{pf}

Now we will obtain certain estimates in the case (b).

%le4.8 #&#
\begin{lemma}\label{lemmaCR}
Let $n_2\ge n_1$, and $\Ga_R(\zeta)$ for $\zeta\in\Hb$ be defined as
above. We have the estimate
\[
\biggl\llvert \frac{1}{2\pi\i}\int_{\Ga_R(\zeta)}
\frac{\exp\{N(S_1(z)-S_2(z))\}}{\sqrt{\Xi_1(z)\Xi_2(z)}}\,dz\biggr\rrvert \le C \exp\bigl\{N\cdot\Re\bigl(S_1(
\zeta)-S_2(\zeta)\bigr)\bigr\}.
\]
Here the constant $C$ is uniform for $(x_1,n_1),(x_2,n_2)$ in the
regime (\ref{chi1chi2regime}) with the condition $n_2\ge n_1$, for
the limiting global positions $(\chi_1,\eta_1), (\chi_2,\eta_2)$
belonging to a compact region $\D_c\subset\D$.
\end{lemma}

\begin{pf}
Assume first that $(x_1,n_1)\ne(x_2,n_2)$. For large $|z|$, we have
the expansion
\begin{eqnarray*}
 F(z)&:=&S_1(z)-S_2(z)\\
 &=&\mathrm{const}+\frac{n_1-n_2}N
\ln z
\\
&&{}+ \frac{1}{z} \biggl( \frac{n_2^{2}-n_1^{2}}{2N^{2}}+\biggl(1-\frac{x_2}N
\biggr) \biggl(1-\frac{n_2}N\biggr)- \biggl(1-\frac{x_1}N\biggr)
\biggl(1-\frac{n_1}N\biggr) \biggr)\\
&&{} +O\biggl(\frac1{z^{2}}
\biggr).
\end{eqnarray*}

Observe that the function $F(z)$ has no nonreal critical points. This
implies that there is a curve in $\Hb$ \emph{starting} at the point
$\zeta\in\Hb$ along which $\Im F(z)=\Im F(\zeta)$ and $\Re F(z)<\Re
F(\zeta)$ for $z\ne\zeta$. This curve can either extend to infinity, or
cross the real line somewhere in the segment $[\chi_2+\eta_2-1,\chi
_2]$; see Figure~\ref{figCRlemma}. This can be seen by considering
the function $\Im F(t+\i\varepsilon)$ of $t\in\R$ similarly to the proof
of Proposition \ref{propKasymptoticsaftertransform}. Note that for
$n_2=n_1$, such curves will never to go to infinity (Figure~\ref{figCRlemma}, right). In the lower half plane the situation is symmetric.

%
%f14 #&#
\begin{figure}[b]

\includegraphics{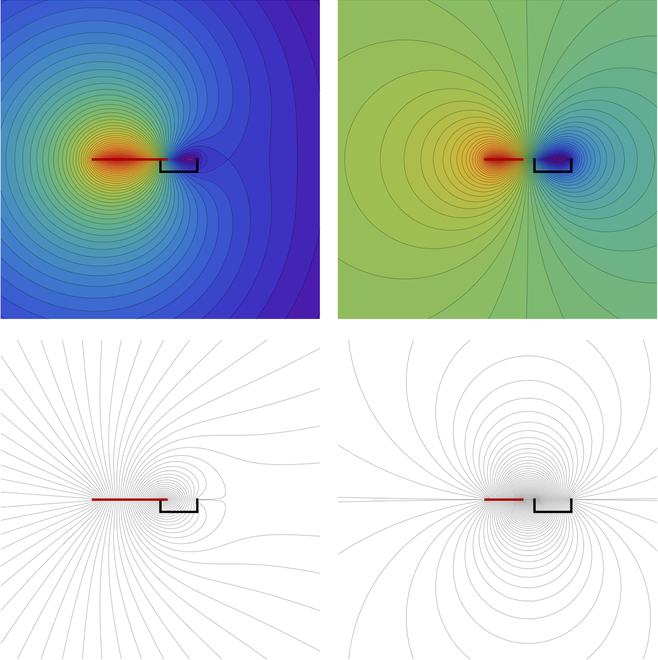}

\caption{Level lines of $\Re F(z)$ (top; warmer colors represent larger
values) and $\Im F(z)$ (bottom) in the case $n_2\ge n_1$ (left) and
$n_2=n_1$ (right). The red (left) segment is $[\chi_1+\eta_1-1,\chi
_1]$, and the black (right) one is $[\chi_2+\eta_2-1,\chi_2]$.}
\label{figCRlemma}
\end{figure}

Since the integrand is regular at $z=\infty$ (see the proof of
Proposition \ref{propKfirstasymptotics}), we can always transform
the contour $\Ga_R(\zeta)$ so that it will consist of curves described
above [along which $\Re F(z)<\Re F(\zeta)$]. This implies the claim for
$(x_1,n_1)\ne(x_2,n_2)$ because if $(\chi_j,\eta_j)\in\D_c$, then the
factor ${1}/{\sqrt{\Xi_1(z)\Xi_2(z)}}$ in the integrand is uniformly
bounded. For $(x_1,n_1)=(x_2,n_2)$, the integral does not depend on
$N$, and the claim is trivial.
\end{pf}

%le4.9 #&#
\begin{lemma}\label{lemmaCL}
Let $n_2< n_1$, and $\Ga_L(\zeta)$ for $\zeta\in\Hb$ be defined as
above. We have
\[
\biggl\llvert \frac{1}{2\pi\i}\int_{\Ga_L(\zeta)}
\frac{\exp\{N(S_1(z)-S_2(z))\}}{\sqrt{\Xi_1(z)\Xi_2(z)}}\,dz\biggr\rrvert \le C \exp\bigl\{N\cdot\Re\bigl(S_1(
\zeta)-S_2(\zeta)\bigr)\bigr\}.
\]
Here the constant $C$ is uniform for $(x_1,n_1),(x_2,n_2)$ in the
regime (\ref{chi1chi2regime}) with the condition $n_2< n_1$, for
the limiting global positions $(\chi_1,\eta_1),(\chi_2,\eta_2)$
belonging to a compact region $\D_c\subset\D$.
\end{lemma}

\begin{pf}
This is established in the same way as Lemma \ref{lemmaCR}. Since
$\Ga_L(\zeta)$ never extends to infinity, we can always transform it to
get the desired estimate; see Figure~\ref{figCLlemma}.
\end{pf}

%
%f15 #&#
\begin{figure}[b]

\includegraphics{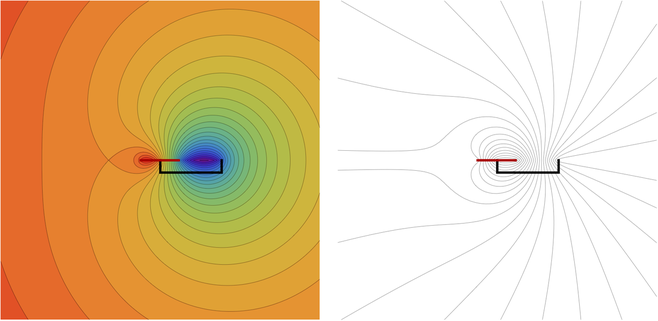}

\caption{Level lines of $\Re F(z)$ (left; warmer colors represent
larger values) and $\Im F(z)$ (right) in the case $n_2< n_1$.}
\label{figCLlemma}
\end{figure}

From Lemmas \ref{lemmaCR} and \ref{lemmaCL} we derive a stronger
estimate which we will use:

%
%pr4.10 #&#
\begin{proposition}\label{propsingleintegralestimate}
The single integral in (\ref{Kasymptoticsaftertransform}) can be
estimated as
%
%e30 #&#
%e31 #&#
\begin{eqnarray}\label{singleintegralestimate}
&& \biggl\llvert \frac{1}{2\pi\i}\int_{\Ga_{\mathrm{single}}}
\frac{\exp\{N(S_1(z)-S_2(z))\}}{\sqrt{\Xi_1(z)\Xi_2(z)}}\,dz\biggr\rrvert
\nonumber\\[-8pt]\\[-8pt]
&&\qquad \le \frac{
C \exp\{N\cdot\Re(S_1(\zeta)-S_2(\zeta))\}}{1+R},
\end{eqnarray}
where $R:=\sqrt{(x_1-x_2)^{2}+(n_1-n_2)^{2}}$. The constant $C$ is
uniform for $(x_1,n_1)$ and $(x_2,n_2)$ behaving as in (\ref
{chi1chi2regime}), for the limiting global positions $(\chi_1,\eta
_1)$, $(\chi_2,\eta_2)$ belonging to a compact region $\D_c\subset\D$.
\end{proposition}

\begin{pf}
The passage from the estimates of Lemmas \ref{lemmaCR} and \ref
{lemmaCL} to (\ref{singleintegralestimate}) is done similarly to
\citet{Duits2011GFF}, Lemma 6.3, and is based on a standard steepest
descent argument.

Let $\zeta\in\Hb$ be the point of intersection of the $z$ and $w$
contours of Proposition \ref{propKasymptoticsaftertransform} (see
also Figure~\ref{figzwcontours}) where the contour $\Ga_{\mathrm
{single}}$ starts. (If the $z$ and $w$ contours do not intersect, the
claim is trivial by Lemma \ref{lemmamustintersect}.) Assume that we
have transformed $\Ga_{\mathrm{single}}$ as in Lemma \ref{lemmaCR} or
\ref{lemmaCL} so that on it we have $\Im F(z)=\Im F(\zeta)$ and
$\Re
F(z)<\Re F(\zeta)$ for $z\ne\zeta$, where $F(z)=S_1(z)-S_2(z)$. If the
new contour extends to infinity (Figure~\ref{figCRlemma}, left), let
us close is so that it will become bounded. Denote this new contour by
$\Ga'_{\mathrm{single}}$.

Let us choose a smooth parametrization $z=z(t)$ of the part of the
curve $\Ga'_{\mathrm{single}}$ inside $\Hb$ such that $z(0)=\zeta$
and $z(1)\in\R$. We have $|z'(t)|<\mathrm{const}$ [and by compactness
of $\D_c$ this constant is uniform in $(x_j,n_j)$], so
\[
\biggl\llvert \frac{1}{2\pi\i}\int_{\Ga_{\mathrm{single}}}
\frac{\exp\{N(S_1(z)-S_2(z))\}}{\sqrt{\Xi_1(z)\Xi_2(z)}}\,dz\biggr\rrvert \le\mathrm{const}\cdot \int_{0}^{1}
\exp\bigl\{N \cdot\Re\bigl(F\bigl(z(t)\bigr)\bigr)\bigr\}\,dt.
\]
Let us assume that $(x_1,n_1)\ne(x_2,n_2)$; otherwise the claim is
again trivial. We have
\[
(x_1-x_2,n_1-n_2)=R(\cos
\phi,\sin\phi),\qquad R>0.
\]
Denote
\[
N \cdot\Re\bigl(F\bigl(z(t)\bigr)\bigr)=R \cdot G_{1,2}(t;\phi).
\]

The property that $\Re F(z)<\Re F(\zeta)$ on our contour allows to
write the following estimate. Since $\D_c$ is compact and $\zeta$
depends continuously on $(\chi_j,\eta_j)$, we can choose $r>0$
uniformly so that
\[
G_{1,2}(t;\phi)-G_{1,2}(0;\phi)<-A t,\qquad 0\le t\le r,
\]
with a constant $A$ not depending on $\zeta$ or $\phi$.

Then we have
\begin{eqnarray*}
\int_{0}^{r}e^{R G_{1,2}(t;\phi)}\,dt&=&
e^{R G_{1,2}(0;\phi)}\int_{0}^{r}e^{R (G_{1,2}(t;\phi
)-G_{1,2}(0;\phi
))}\,dt
\\
&\le& e^{R G_{1,2}(0;\phi)}\int_{0}^{r}e^{-ARt}\,dt
\le \frac{1}{AR}e^{R G_{1,2}(0;\phi)}
\end{eqnarray*}
and
\[
\int_{r}^{1}e^{R G_{1,2}(t;\phi)}\,dt \le
e^{-ArR}e^{R G_{1,2}(0;\phi)},
\]
because on the contour $\{z(t)\dvtx r\le t\le1\}$ we have the
inequalities $\Re F(z)\le\Re F(z(r))\le\Re F(\zeta)-Ar$. This
concludes the proof.
\end{pf}

% subsection estimating_the_single_integral (end)

%s4.5 #&#
\subsection{Asymptotics of the kernel in the bulk} % (fold)
\label{subasymptoticsofthekernelinthebulk}

Our aim in this subsection is to write an exact asymptotic expansion of
the kernel $K(x_1,n_1;x_2,n_2)$ (\ref{Kasymptoticsaftertransform})
when the two points $(x_1,n_1)$ and $(x_2,n_2)$ are in the bulk of the
system [i.e., they behave as in (\ref{chi1chi2regime}) and $(\chi
_j,\eta_j)\in\D$] and are sufficiently far from each other. The
technique of getting such an expansion involves only ``local''
properties of the double contour integral formula (\ref
{Kasymptoticsaftertransform}) for the kernel (in contrast to some
considerations of Section~\ref{subestimatingthesingleintegral}),
and mainly follows the approach of \citet{Ferrari2008} and \citet{Duits2011GFF}.

%pr4.11 #&#
\begin{proposition}\label{prop4summands}
Fix (sufficiently small) $\delta>0$ and a compact $\D_c\subset\D$.
Uniformly in $(x_j,n_j)$ ($j=1,2$) with $ (\frac{x_j}{N},\frac
{n_j}{N} )\in\D_c$, such that $\|(x_1,n_1)-(x_2,n_2)\|\ge
N^{{1}/{2}+\delta}$, we have the following expansion:
\begin{eqnarray}\label{Kexpansion}
&&
K(x_1,n_1;x_2,n_2)
\nonumber\\
&&\qquad=- \frac{1}{2\pi N}
\biggl( \frac{e^{N(S_1(\om_1)-S_2(\om_2))}} {
(\om_1-\om_2)\sqrt{\Xi_1(\om_1)\Xi_2(\om_2)}(-S_1''(\om
_1))^{1/2}(S_{2}''(\om_2))^{1/2}}
\nonumber\\
&&\hspace*{38pt}\qquad\quad{} + \frac{e^{N(S_1(\omb_1)-S_2(\om_2))}} {
(\omb_1-\om_2)\sqrt{\Xi_1(\omb_1)\Xi_2(\om_2)}(-S_1''(\omb
_1))^{1/2}(S_{2}''(\om_2))^{1/2}}
\nonumber
\\[-8pt]
\\[-8pt]
\nonumber
&&\hspace*{38pt}\qquad\quad{} + \frac{e^{N(S_1(\om_1)-S_2(\omb_2))}} {
(\om_1-\omb_2)\sqrt{\Xi_1(\om_1)\Xi_2(\omb_2)}(-S_1''(\om
_1))^{1/2}(S_{2}''(\omb_2))^{1/2}}
\nonumber
\\
&&\hspace*{38pt}\qquad\quad{} + \frac{e^{N(S_1(\omb_1)-S_2(\omb_2))}} {
(\omb_1-\omb_2)\sqrt{\Xi_1(\omb_1)\Xi_2(\omb_2)}(-S_1''(\omb
_1))^{1/2}(S_{2}''(\omb_2))^{1/2}} \biggr)\nonumber\\
&&\qquad\quad{}\times \bigl(1+O\bigl(N^{-\delta/2}\bigr)\bigr).
\nonumber
\end{eqnarray}
The branches of the square roots $(\pm S_j''(\cdots))^{1/2}$ above
are chosen in accordance with the directions of the $w$ and $z$
contours in the double integral in (\ref
{Kasymptoticsaftertransform}) at points $\om_1$, $\omb_1$ and
$\om
_2$, $\omb_2$, respectively; see (\ref{Kexpansionnewvariables}) in
the proof.
\end{proposition}
%
%In the rest of the paper $S'(w;\chi,\eta)$ will always denote the
%derivative with respect to the first argument $w$.

%
\begin{pf}
Observe that the contribution from the single integral in (\ref
{Kasymptoticsaftertransform}) given in Proposition \ref
{propsingleintegralestimate} is asymptotically negligible in
comparison to the desired expansion (\ref{Kexpansion}). Thus, it
suffices to consider only the double contour integral in (\ref
{Kasymptoticsaftertransform}),
%
%e32 #&#
\begin{equation}
\label{Kexpansiondoublecontour} \qquad I_2(x_1,n_1;x_2,n_2):=
\frac{1}{(2\pi\i)^{2}} \oint_{\{z\}} \oint_{\{w\}} \frac{dz \,dw}{w-z}
\frac{\exp \{N (S_1(w)-S_2(z) ) \}} {
\sqrt{\Xi_1(w)\Xi_2(z)}}.
\end{equation}
Recall that the $w$ contour passes through the critical points $\om
_1,\omb_1$, and on it we have $\Im S_1(w)=\Im S_1(\om_1)$ and $\Re
S_1(w)<\Re S_1(\om_1)$ for $w\ne\om_1,\omb_1$. Similarly for the $z$
contour: it passes through $\om_2,\omb_2$, and on it $\Im S_2(z)=\Im
S_2(\om_2)$, and $\Re S_2(z)>\Re S_2(\om_2)$ for $z\ne\om_2,\omb_2$.

The main contributions to (\ref{Kexpansiondoublecontour}) come from
neighborhoods of the critical points, and parts of the contours which
are sufficiently far from them give an exponentially small
contribution. Since there are four pairs of critical points, we get
four summands in (\ref{Kexpansion}). Let us consider only the case of
$(\om_1,\om_2)$, for the other three pairs the situation is analogous.

In small neighborhoods of $\om_1$ and $\om_2$ let us replace the
(curved) $w$ and $z$ contours by the corresponding tangent lines.
Introduce the local coordinates $t,s\in[-N^{{\delta}},N^{\delta}]$
as follows:
%
%e33 #&#
\begin{equation}
\label{Kexpansionnewvariables} w(t)=\om_1+\frac{t}{\sqrt N(-S_1''(\om_1))^{1/2}}, \qquad z(s)=
\om_2+\frac{s}{\sqrt N(S_2''(\om_2))^{1/2}}.
\end{equation}
Here the branches of the square roots $(-S_1''(\om_1))^{1/2}$ and
$(S_2''(\om_2))^{1/2}$ are chosen so that when $t$ (resp., $s$)
increases, the point $w(t)$ [resp., $z(s)$] passes along the tangent
line to the $w$ (resp., $z$) contour in the direction of that contour.

In these new variables the behavior of the exponents in the double
contour integral is
%
%e34 #&#
\begin{eqnarray}
\label{KexpansionSasymptotics} \lim_{N\to\infty} N
\bigl(S_1\bigl(w(t)\bigr)-S_1(\om_1)
\bigr)&=&-t^{2}/2,
\nonumber
\\[-8pt]
\\[-8pt]
\nonumber
 \lim_{N\to\infty} N\bigl(S_2
\bigl(z(s)\bigr)-S_2(\om_2)\bigr)&=&s^{2}/2.
\end{eqnarray}
The convergence here is uniform for $t,s\in[-N^{\delta},N^{\delta}]$
and also (by compactness of $\D_c$ and continuity) for our values of
$(x_j,n_j)$. Moreover, at the endpoints $t,s=\pm N^{\delta}$ the
expressions $e^{N(S_1(w(t))-S_1(\om_1))}$ and $e^{N(S_2(z(s))-S_2(\om
_2))}$ are exponentially small. Parts of the contours which are even
farther from the critical points $\om_1,\omb_1$ and $\om_2,\omb_2$ thus
give an exponentially negligible contribution.

This implies that the double contour integral (\ref
{Kexpansiondoublecontour}) picks the following contribution from the
neighborhood of $(\om_1,\om_2)$:
%
%e35 #&#
\begin{eqnarray}\label{Kexpansionmaincontribution}
&&\frac{1}{(2\pi\i)^{2}} \frac{1}{N(-S_1''(\om_1))^{1/2}(S_2''(\om_2))^{1/2}} \int_{-N^{\delta}}^{N^{\delta}}
\int_{-N^{\delta}}^{N^{\delta}} {\,ds \,dt}
\nonumber
\\[-8pt]
\\[-8pt]
\nonumber
&&\qquad\times\frac{\exp \{N (S_1(w(t))-S_2(z(s)) ) \}} {
({w(t)-z(s)})\sqrt{\Xi_1(w(t))\Xi_2(z(s))}}.
\end{eqnarray}
Let us now get rid of nonexponential terms in the integral above. The
map $\om^{-1}\dvtx\Hb\to\D$ is a diffeomorphism, so there exists a
constant $A>0$ such that
\begin{eqnarray*}
|\om_1-\om_2|\ge A\bigl\|(\chi_1,
\eta_1)-(\chi_2,\eta_2)\bigr\|\ge A
N^{-1/2+\delta}.
\end{eqnarray*}
The second derivatives $S_{1,2}''(\om_{1,2})$ are nonzero inside $\D$
and hence are bounded from below in $\D_c$ (recall that they vanish on
the frozen boundary $\partial\D$). Thus, we may write
\[
\frac{1}{w(t)-z(s)}=\frac{1}{\om_1-\om_2}\bigl(1+O\bigl(N^{-\delta/2}\bigr)
\bigr),
\]
where the constant in $O(N^{-\delta/2})$ does not depend on
$\delta
$ (it depends only on $\D_c$). We can also replace $\Xi_1(w(t))\Xi
_2(z(s))$ by $\Xi_1(\om_1)\Xi_2(\om_2)$. This will affect the
asymptotics by a factor which is less significant than $(1+O(N^{-\delta/2}))$. Thus, we may rewrite (\ref
{Kexpansionmaincontribution}) as
\begin{eqnarray*}
&& -\frac{1}{(2\pi)^2} \frac{e^{N(S_1(\om_1)-S_2(\om_2))}}{N(-S_1''(\om_1))^{1/2}(S_2''(\om_2))^{1/2}
(\om_1-\om_2)\sqrt{\Xi_1(\om_1)\Xi_2(\om_2)}
}
\\
&&\qquad\times \bigl(1+O\bigl(N^{-\delta/2}\bigr)\bigr) \int_{-N^{\delta}}^{N^{\delta}}
\int_{-N^{\delta}}^{N^{\delta}} e^{N (S_1(w(t))-S_1(\om_1)-S_2(z(s))+S_2(\om_2) )}{\,ds \,dt}.
\end{eqnarray*}
Taking $N$ large and using the uniform asymptotics (\ref
{KexpansionSasymptotics}), we see that the above double integral
becomes Gaussian and can be explicitly evaluated. This completes the proof.
\end{pf}

%co4.12 #&#
\begin{corollary}\label{corKexpansionshifted}
In the setting of Proposition \ref{prop4summands}, we have the same
expansion for $K(x_1,n_1;x_2+1,n_2-1)$ as in (\ref{Kexpansion}), but
with an extra factor of $\frac{\om_2-{x_2}/N}{1-{n_2}/N}$ in each term
with $\om_2$, and with a factor of $\frac{\omb_2-{x_2}/N}{1-{n_2}/N}$
in each term with $\omb_2$.
\end{corollary}

\begin{pf}
This is obtained in the same way as Proposition \ref{prop4summands}
using the fact that
\[
-NS\biggl(z;\frac{x_2+1}N,\frac{n_2-1}N\biggr)\sim -NS\biggl(z;
\frac{x_2}N,\frac{n_2}N\biggr)-\ln\biggl(1-\frac{n_2}N
\biggr)+\ln\biggl(z-\frac{x_2}N\biggr).
\]
See also Lemma 7.4 in \citet{Petrov2012}.
\end{pf}

We can also write an estimate of the double contour integral $I_2$
(\ref
{Kexpansiondoublecontour}) when the points $(x_1,n_1)$ and
$(x_2,n_2)$ are sufficiently close:

%
%le4.13 #&#
\begin{lemma}\label{lemmaI2whentwopointsareclose}
Fix (sufficiently small) $\delta>0$ and a compact $\D_c\subset\D$.
Uniformly in $(x_j,n_j)$ ($j=1,2$) with $ (\frac{x_j}{N},\frac
{n_j}{N} )\in\D_c$, such that $\|(x_1,n_1)-(x_2,n_2)\|\le
N^{{1}/{2}+\delta}$, we have the following estimate:
\[
\bigl|I_2(x_1,n_1;x_2,n_2)\bigr|
\le\frac{Ce^{N\cdot\Re(S_1(\om_1)-S_2(\om
_2))}}{\sqrt N}.
\]
\end{lemma}

\begin{pf}
We argue as in Proposition \ref{prop4summands}, but now we
must estimate the term $1/(w(t)-z(s))$ in a different way. Since the
points $\om_1$ and $\om_2$ are close, we can write $S_1''(\om
_1)=S_2''(\om_2)(1+O(N^{-1/2+\delta}))$, where the constant in
$O(N^{-1/2+\delta})$ is uniform in our $(x_j,n_j)$'s and depends
only on $\D_c$. This implies that in $1/(w(t)-z(s))$ we can replace
$(-S_1''(\om_1))^{1/2}$ with $\pm\i(S_2''(\om_2))^{1/2}$, where
the sign $\pm$ depends on the choice of square roots. Moreover, we have
$|\om_1-\om_2|=O(N^{-1/2+\delta})$, so we can write
\[
\frac{1}{w(t)-z(s)} \approx \frac{\sqrt N} {
\sqrt N(\om_1-\om_2)-(S_2''(\om_2))^{-1/2}(s\pm\i t)}.
\]
Then we proceed as in the proof of Proposition \ref{prop4summands} and
see that the resulting double integral has the following asymptotics
coming from the neighborhood of $(\om_1,\om_2)$:
\begin{eqnarray*}
&& -\frac{1}{(2\pi)^2} \frac{e^{N(S_1(\om_1)-S_2(\om_2))}}{\sqrt{N}
(-S_1''(\om_1))^{1/2}(S_2''(\om_2))^{1/2}
\sqrt{\Xi_1(\om_1)\Xi_2(\om_2)}}
\\
&&\qquad\times \int_{-N^{\delta}}^{N^{\delta}} \int_{-N^{\delta}}^{N^{\delta}}
\frac{e^{N (S_1(w(t))-S_1(\om_1)-S_2(z(s))+S_2(\om_2) )}} {
\sqrt N(\om_1-\om_2)-(S_2''(\om_2))^{-1/2}(s\pm\i t)} {\,ds \,dt}.
\end{eqnarray*}
(For other three pairs of critical points, one can get the same
estimate.) Depending on how close the points $\om_1$ and $\om_2$ in our
regime, we see that the above integral may have a singularity which is
integrable, and (on the other hand) the expression $\sqrt N(\om_1-\om
_2)$ may go to infinity. This implies that we can always bound the
integral by a constant, and thus we arrive at the desired estimate.
\end{pf}

%
%re4.14 #&#
\begin{remark}\label{rmkdoublesingleintegralsufficiency}
In Proposition \ref{prop4summands} we see that when the points
$(x_j,n_j)$ ($j=1,2$) are sufficiently far from each other, the main
contribution to $K(x_1,n_1;x_2,n_2)$ (\ref
{Kasymptoticsaftertransform}) comes from the double contour
integral. On the contrary, for sufficiently close points $(x_j,n_j)$,
the single integral in (\ref{Kasymptoticsaftertransform}) is
asymptotically more significant. On the extreme, when asymptotically
the differences $x_1-x_2,n_1-n_2\in\Z$ stabilize, the double integral
in (\ref{Kasymptoticsaftertransform}) vanishes in the limit, while
the single integral gives rise to the incomplete beta kernel; see
\citeauthor{Petrov2012} [(\citeyear{Petrov2012}), Theorem 2 and Proposition 7.9].
\end{remark}

% subsection asymptotics_of_the_kernel_in_the_bulk (end)

%s4.6 #&#
\subsection{Estimates of the kernel close to the edge and in the facet}
%% (fold)
\label{subestimatesofthekernelclosetotheedge}

We conclude this section with several estimates for the kernel
$K(x_1,n_1;x_2,n_2)$ (\ref{Kasymptoticsaftertransform}) when one or
both points $(x_j,n_j)$ becomes close to the lower left edge of the
liquid region~$\D$, or outside $\D$ in the lower left facet; see
Figure~\ref{figGFFpaths}. We mainly follow a similar treatment for a
simpler kernel which was performed in \citet{Ferrari2008}, Section~6.

Recall (Section~\ref{subpathstotheboundary}) that we choose the
paths for calculating the height function as in Figure~\ref{figGFFpaths} ending in the lower left facet. Let $\eta=\eta
_{\mathrm{fb}}(\chi)$ be the equation for the corresponding lower
left part
of the frozen boundary. Thus, the liquid region (sufficiently close to
that part of $\partial\D$) is determined by the inequality $\eta
-\eta
_{\mathrm{fb}}(\chi)>0$. We distinguish three regimes for a point
$(x,n)$ (in the pre-limit integer coordinates):
%
%e36 #&#
%e37 #&#
%e38 #&#
\begin{eqnarray}
&\mbox{(inside the liquid region)} \qquad n-N\eta_{\mathrm{fb}} (x/N )\ge
N^{2/3};&\label{xnliquid}
\\
&\mbox{(close to the edge)}\qquad  N^{2/3}\ge n-N\eta_{\mathrm
{fb}} (x/N )
\ge c N^{1/3};&\label{xncloseedge}
\\
&\mbox{(at the edge or in the facet)}\qquad  n-N\eta_{\mathrm{fb}} (x/N )\le c
N^{1/3}&\label{xnedgefacet}
\end{eqnarray}
for some $c>0$.

%le4.15 #&#
\begin{lemma}\label{lemmaedge1}
Assume that the points $(x_j,n_j)$ ($j=1,2$) behave as in (\ref
{chi1chi2regime}), and one or both of them is close to the lower
left edge as in (\ref{xncloseedge}). Also, let $|\om_1-\om_2|$ be
bounded away from zero uniformly in $N$. Then there exists $c$ in (\ref
{xncloseedge}) large enough so that we have
%
%e39 #&#
\begin{equation}
\label{lemmaedge1estimate} \bigl|K(x_1,n_1;x_2,n_2)\bigr|
\le\frac{C e^{N \Re(S_1(\om_1)-S_2(\om_2))}} {
N\sqrt{|S_1''(\om_1)S_2''(\om_2)|}},
\end{equation}
uniformly in $N$.
\end{lemma}

\begin{pf}
Because $|\om_1-\om_2|$ must be bounded from below, we see that the
limiting global positions $(\chi_j,\eta_j)$, $j=1,2$, must be distinct.
Proposition \ref{propsingleintegralestimate} (cf. Remark \ref
{rmkdoublesingleintegralsufficiency}) then implies that the single
integral in (\ref{Kasymptoticsaftertransform}) is asymptotically
less significant than the desired estimate (\ref{lemmaedge1estimate})
for the kernel [note that at least one of the factors $S_1''(\om_1),
S_2''(\om_2)$ goes to zero as $N\to\infty$, see also the proof of Lemma~\ref{lemmaVclosetoedge}]. Therefore, it suffices to derive (\ref
{lemmaedge1estimate}) for the double contour integral in (\ref
{Kasymptoticsaftertransform}).

As usual in the steepest descent approach, the main contribution to
the double contour integral in (\ref{Kasymptoticsaftertransform})
comes from the neighborhoods of the critical points. Thus, there we
have $w\approx\om_1$, $z\approx\om_2$ (plus three more possibilities
with $\omb_{1,2}$ replacing $\om_{1,2}$, but they give the same
contribution to the desired bound).

Let, by agreement, the paths in Figure~\ref{figGFFpaths} be
separated [in the limiting coordinates $(\chi,\eta)$] form the tangent
points of the frozen boundary and the sides of the polygon. Clearly,
such paths still can be chosen. Thus, we may think that the quantities
$\Xi_1(\om_1)$ and $\Xi_2(\om_2)$ are uniformly bounded away from zero;
see also \citet{Petrov2012}, Proposition 2.7 and Figure~14. Thus, it
remains to estimate the product of two single integrals
\[
\oint_{\{w\}} e^{N S_1(w)}\,dw \oint_{\{z\}}
e^{-N S_2(z)}\,dz,
\]
where the $w$ and $z$ contours are as in (\ref
{Kasymptoticsaftertransform}). We will derive the estimate of the
form $\frac{C e^{\pm N \Re S_{1,2}(\om_{1,2})}}{\sqrt {N|S_{1,2}''(\om
_{1,2})|}}$ for each of the single integrals (with ``$+$'' sign for the
first integral, and ``$-$'' for the second one), and this will give the
desired claim. If, say, $(x_1,n_1)$ is not close to the edge, then the
corresponding estimate can be obtained in the same way as in
Proposition \ref{prop4summands}. So let us assume that $(x_1,n_1)$ is
close to the lower left edge, and estimate the $w$ integral above; for
the $z$ integral the argument is the same.

From \citet{Petrov2012}, Proposition 2.7, it follows that for
$(x_1,n_1)$ approaching the lower left edge of the liquid region, the
corresponding critical point $\om_1$ approaches the real line to the
left of the point $a_1$ from (\ref{scaleAiBi}). Using \citet
{Petrov2012}, Section~2.3 and (2.10) (cf. Remark \ref
{rmkbulkreference}), it can be shown that $\arg S''_1(\om_1)$ tends
to $-\pi/2$. Let us introduce the local variable $t$ around~$\om_1$,
\[
w(t)=\om_1+e^{-\i\pi/4}t, \qquad -\delta\le t\le\sqrt2\Im(
\om_1).
\]
It can be readily checked [cf. \citet{Ferrari2008}, Lemma 6.8] that
replacing the $w$ contour around $\om_1$ by the straight line $\{w(t)\}
$ will not affect the desired bound [provided that $c$ in (\ref
{xncloseedge}) is large enough]. We then have
\[
S_1\bigl(w(t)\bigr)=S_1(\om_1)-
\frac{\i}{2}S''_1(
\om_1)t^{2}- \frac{1+\i}{6\sqrt2}S'''_1(
\om_1)t^{3}+O\bigl(t^{4}\bigr),
\]
and
$\Re(-\frac{\i}{2}S''_1(\om_1)t^2)\approx-\frac12|S''_1(\om
_1)|t^2$. Since $\om_1$ is close to the real line, one can derive an
equivalence of the form $S_1'''(\om_1)\approx\frac{S_1''(\om_1)}{\i
\Im
(\om_1)}\approx-\frac{|S_1''(\om_1)|}{\Im(\om_1)}$, and so
\[
\Re\biggl(-\frac{1+\i}{6\sqrt2}S'''_1(
\om_1)t^{3}\biggr) \approx\frac{1}{6\sqrt2\Im(\om_1)}\bigl|S''_1(
\om_1)\bigr|t^3.
\]
We see that for $-\delta\le t\le0$, the cubic term helps the
convergence, and for $0\le t\le\sqrt2\Im(\om_1)$, we can bound the
cubic term by the quadratic term which will also ensure the
convergence. We thus see that the integral of $e^{N (S_1(w)-S_1(\om
_1))}$ around $\om_1$ is equal to a constant times the integral of
$\exp
(-\frac N2|S_1''(\om_1)|t^{2})$, which leads to the desired estimate.
\end{pf}

To describe further estimates, we need to introduce some notation. Let
$(x,n)$ be at the lower left edge or in the corresponding facet as in
(\ref{xnedgefacet}). We would like to mimic the critical point $\om
(\frac xN,\frac nN )$ and the value of the action $S(\om
(\frac xN,\frac nN);\frac xN,\frac nN)$ for such $(x,n)$ as follows:
\begin{eqnarray*}
 \tilde\om&=&\tilde\om \biggl(\frac xN,\frac nN \biggr):=\om \biggl(\frac xN,
\eta_{\mathrm{fb}} \biggl(\frac xN \biggr) \biggr)\in\R,
\\
 \tilde S \biggl( w;\frac xN,\frac nN \biggr)&:=& \biggl(w-\frac{x}N
\biggr) \ln \biggl(w-\frac{x}N \biggr)\\
&&{}- \biggl(w-\frac{x}N+1-
\frac{n}N \biggr) \ln \biggl(w-\frac{x}N+1-\eta_{\mathrm{fb}}
\biggl(\frac{x}N \biggr) \biggr)
\\
&&{}+ \biggl(1-\frac{n}N \biggr) \ln \biggl(1-\eta_{\mathrm{fb}} \biggl(
\frac{x}N \biggr) \biggr)\\
&&{}+ \sum_{i=1}^{k}
\bigl[(b_i-w)\ln(b_i-w)-(a_i-w)
\ln(a_i-w) \bigr].
\end{eqnarray*}
Denote by $\tilde\om_{1,2}$ and $\tilde S_{1,2}(w)$ the corresponding
quantities at $(x_{1,2},n_{1,2})$ similarly to $\om_{1,2}$ and
$S_{1,2}(w)$; see also (\ref{Xi12S12}). Note that when the point
$(x,n)$ is at the edge, that is, when $n-N\eta_{\mathrm{fb}}
(x/N )=O(N^{1/3})$, we have $|\om-\tilde\om|=O(N^{1/3})$,
and same for $\tilde S$.

%le4.16 #&#
\begin{lemma}\label{lemmaedge2}
Assume that the situation is as in Lemma \ref{lemmaedge1}, but now
the point $(x_2,n_2)$ is at the lower left edge or in the corresponding
facet as in (\ref{xnedgefacet}), while $(x_1,n_1)$ is in the bulk
(\ref{xnliquid}) or close to the edge (\ref{xncloseedge}). Let
$|\om
_1-\tilde\om_2|$ be bounded away from zero uniformly in $N$. Then, also
uniformly in $N$, we get the following estimate with $C,C_2>0$:
\begin{eqnarray*}
&&\bigl|K(x_1,n_1;x_2,n_2)\bigr|\\
&&\qquad\le
\frac{C e^{N \Re S_1(\om_1)}} {
\sqrt{N|S_1''(\om_1)|}} \times \frac{e^{-N\Re\tilde S_2(\tilde\om_2)}}{N^{1/3}} \exp \biggl\{-C_2
N^{2/3} \biggl(\eta_{\mathrm{fb}} \biggl(\frac{x_2}N \biggr)-
\frac{n_2}N \biggr) \biggr\}.
\end{eqnarray*}
\end{lemma}

\begin{pf}
Similarly to the beginning of the proof of Lemma \ref{lemmaedge1}, we
see that it suffices to estimate the product of two single integrals.
The $w$ integral is bounded as in Lemma \ref{lemmaedge1} (yielding the
first factor in the claim). Thus, it remains to estimate the $z$
integral $\oint_{\{z\}}e^{-N S_2(z)}\,dz$. We will mainly follow the
approach of \citet{Ferrari2008}, Section~6.1 [which is in turn based
on the technique first applied in \citet{Borodin2008TASEPII},
Propositions 15 and 17]. We provide a brief derivation omitting certain
bounds which are done in a way similar to what is performed in \citet
{Ferrari2008}, Section~6.1.

Let us first consider the following scaling of $(x_2,n_2)$:
\[
x_2=[\chi_2 N],\qquad n_2=\bigl[N
\eta_{\mathrm{fb}} (\chi_2 )+ uN^{1/3}\bigr],
\]
where $u\in\R$, and $\chi_2$ is some coordinate such that the line
$\{
\chi\dvtx\chi=\chi_2\}$ intersects the lower left part of the frozen
boundary as in Figure~\ref{figGFFpaths}. Let us expand
\[
-NS\biggl(z;\frac{x_2}N,\frac{n_2}N\biggr) \approx -N S\bigl(z;
\chi_2,\eta_{\mathrm{fb}} (\chi_2 )\bigr)-
uN^{1/3}S_{\eta} \bigl(z;\chi_2,
\eta_{\mathrm{fb}} (\chi_2 )\bigr).
\]
We deform the $z$ contour in $\oint_{\{z\}}e^{-N S_2(z)}\,dz$ so that it
will pass through the real double critical point $\tilde\om_2=\om
(\chi
_2,\eta_{\mathrm{fb}} (\chi_2 ))$. As it usually happens for
Airy-type asymptotics, the main contribution to the integral comes from
an $N^{1/3}$-neighborhood of the double critical point. Let us
introduce the local variable $t$,
\[
z=\tilde\om_2+tN^{-1/3},
\]
and continue the above expansion,
\begin{eqnarray*}
&& -NS\bigl(\tilde\om_2+tN^{-1/3};\chi_2,
\eta_{\mathrm{fb}} (\chi _2 )\bigr)- uN^{1/3}S_{\eta}
\bigl(\tilde\om_2+tN^{-1/3};\chi_2,
\eta_{\mathrm{fb}} (\chi _2 )\bigr)
\\
&&\qquad \approx -NS\bigl(\tilde\om_2;\chi_2,
\eta_{\mathrm{fb}} (\chi_2 )\bigr) -\tfrac16 t^3
S'''\bigl(\tilde\om_2;
\chi_2,\eta_{\mathrm{fb}} (\chi_2 )\bigr)
\\
&&\qquad\quad{}- uN^{1/3} S_{\eta} \bigl(\tilde\om_2;
\chi_2,\eta_{\mathrm{fb}} (\chi_2 )\bigr) -ut
S_{\eta}' \bigl(\tilde\om_2;\chi_2,
\eta_{\mathrm{fb}} (\chi_2 )\bigr).
\end{eqnarray*}
The terms $-\frac16 t^3 S'''(\tilde\om_2;\chi_2,\eta_{\mathrm
{fb}} (\chi_2 ))-ut S_{\eta}'(\tilde\om_2;\chi_2,
\eta_{\mathrm{fb}} (\chi_2 ))$ after the integration in the
neighborhood of the double critical point $\tilde\om_2$ give the Airy
function [similarly to \citet{Ferrari2008}, Lemma 6.1]. The terms
containing $N$ contribute to the factor $e^{-N\Re\tilde S_2(\tilde\om
_2)}$ [after substituting $u=N^{2/3} (\frac{n_2}N-\eta
_{\mathrm{fb}} (\frac{x_2}N ) )$].

For general $(x_2,n_2)$ as in (\ref{xnedgefacet}), the desired
exponential estimate containing $e^{\mathrm{const}\cdot u}$ for the
single integral is obtained along the lines of Lemma 6.2 in \citet
{Ferrari2008}. This completes the proof.
\end{pf}

%le4.17 #&#
\begin{lemma}\label{lemmaedge3}
Assume that now both the points $(x_j,n_j)$ ($j=1,2$) are at the lower
left edge or in the corresponding facet as in (\ref{xnedgefacet}).
Let $|\tilde\om_1-\tilde\om_2|$ be bounded away from zero uniformly in
$N$. Then
\begin{eqnarray*}
&&\bigl|K(x_1,n_1;x_2,n_2)\bigr|\\
&&\qquad\le
\frac{C e^{N(\Re\tilde S_1(\tilde\om
_1)-\Re
\tilde S_2(\tilde\om_2))}}{N^{2/3}}
\\
&&\qquad\quad{} \times \exp \biggl\{-C_1 N^{2/3} \biggl(
\eta_{\mathrm{fb}} \biggl(\frac{x_1}N \biggr)- \frac{n_1}N
\biggr)-C_2 N^{2/3} \biggl(\eta_{\mathrm{fb}} \biggl(
\frac{x_2}N \biggr)- \frac{n_2}N \biggr) \biggr\}
\end{eqnarray*}
uniformly in $N$, where $C,C_1,C_2>0$.
\end{lemma}

\begin{pf}
This lemma is obtained similarly to the previous Lemma \ref
{lemmaedge2}, but now we derive exponential estimates for both $w$ and
$z$ integrals.
\end{pf}

% subsection estimates_of_the_kernel_close_to_the_edge (end)

% section asymptotics_of_the_kernel (end)

%s5 #&#
\section{Completing the proofs} % (fold)
\label{seccompletingtheproofs}

In this section we finish the proof of Theorem \ref
{thmmomentconvergenceintro} (Sections~\ref{subexpandingdeterminantsin}--\ref
{subcontributionfromfixedpointfreeinvolutions}), and then explain
how it leads to Theorem \ref{thmweakconvergenceintro} (Section~\ref{subconvergencetogffproofoftheoremthmweakconvergenceintro}).

%s5.1 #&#
\subsection{Expanding determinants in $\mathsf{s}$-fold sums \texorpdfstring{(\protect\ref{sfoldsumofdeterminants})}{(21)}} % (fold)
\label{subexpandingdeterminantsin}

Fix pairwise distinct points $(\chi_1,\eta_1),\ldots,(\chi_\spt,\eta
_\spt)$ inside the liquid region $\D$. In Section~\ref{secheightfunctionanditsmultipointfluctuations} we showed that
the expectation $\E (H_N(\chi_1,\eta_1 )\cdots H_N(\chi_\spt,\eta
_\spt) )$ of Theorem \ref{thmmomentconvergenceintro} can be
expressed as a linear combination of expressions like (\ref
{sfoldsumofdeterminants})
%
%e40 #&#
\begin{eqnarray}
\sum_{y_1=x_1+1}^{x_1'}\cdots \sum
_{y_\rpt=x_\rpt+1}^{x_\rpt'} \sum_{m_{\rpt+1}=n_{\rpt+1}+1}^{n_{\rpt+1}'}
\cdots \sum_{m_\spt=n_{\spt}+1}^{n_{\spt}'} \det %
\left[
\matrix{ A_{1,1}&A_{1,2}\vspace*{2pt}
\cr
A_{2,1}&A_{2,2}
}\right],\label{sfoldsumagain}
\end{eqnarray}
where the matrix blocks are given in (\ref{A11blocks}). Each such
$\spt$-fold sum corresponds to a choice of one linear (horizontal or
vertical) part on every $i$th path starting at the point $(\chi_i,\eta
_i)$, $i=1,\ldots,\spt$; see Figure~\ref{figGFFpaths}. Throughout the
section we assume that these paths on Figure~\ref{figGFFpaths} along
which we calculate the height function are separated from tangent
points as explained in the proof of Lemma \ref{lemmaedge1}.

Let us consider one $\spt$-fold sum as in (\ref{sfoldsumagain}).
Expanding the above $\spt\times\spt$ determinant, we write it as the
sum over permutations $\si\in\mathfrak{S}(\spt)$ of terms each of which
is $\mathop{\mathrm{sgn}}\si$ times the product of matrix elements with
indices $(i,\si_i)$, $i=1,\ldots,\spt$. Express $\si$ as a union of
several disjoint cycles. Since the matrix $
\bigl[{A_{1,1}\atop A_{2,1}}\enskip { A_{1,2} \atop A_{2,2}}\bigr]
%]
$ has zero diagonal entries, all these cycles must have length $\ge2$.
In the next subsection we will show that the contribution of
permutations containing cycles of length $\ge3$ becomes negligible in
the limit as $N\to\infty$.

% subsection expanding_determinants_in_expectation_ (end)

%s5.2 #&#
\subsection{Contribution of permutations with cycles of length \texorpdfstring{$\ge3$}{>=3}}
%% (fold)
\label{subcontributionfrompermutationswithcyclesoflengthge3}

Let the permutation $\si\in\mathfrak{S}(\spt)$ contain a cycle of
length $\ell\ge3$. To shorten the notation, we assume that this cycle
is $1\to2\to\cdots\to\ell\to1$. In the expansion of the determinant
in (\ref{sfoldsumagain}) we take the product of the kernels and do a
horizontal (over $y_j=x_j+1,\ldots,x_j'$) or vertical (over
$m_j=n_j+1,\ldots,n_j'$) summation. Let us collect terms corresponding
to a fixed index $i=1,\ldots,\ell$. We will assume that the shifts
$i\pm
1$ are given $\mathrm{mod} \ell$. There are four possible cases we consider:

\begin{longlist}[(V)]
\item[(V)] The summation related to the index $i$ is performed over a
vertical segment: $\{(x_i,m_i)\dvtx m_i=n_i+1,\ldots,n_i'\}$. It can
happen that this vertical segment crosses the lower left frozen
boundary; see Figure~\ref{figGFFpaths}. Thus, we split the summation
into three parts according to (\ref{xnliquid})--(\ref{xnedgefacet}).
\end{longlist}

\begin{longlist}[(V; edge or facet)]
\item[(V; edge or facet)] Summation over $I_1:=\{n_i+1,\ldots,n_i'\}
\cap\{m_i\dvtx m_i\le\break   N \eta_{\mathrm{fb}} (x_i/N )+ c
N^{1/3}\}$. Here $\eta_{\mathrm{fb}}$ is defined in Section~\ref{subestimatesofthekernelclosetotheedge}. We need to consider
%
%e41 #&#
\begin{equation}
\label{Vedgefacetsum} -\sum_{m_i\in I_1}
K(t_{i-1},u_{i-1};x_i+1,m_i-1)K(x_i,m_i;t_{i+1},u_{i+1}).
\end{equation}
The minus sign is coming from the second factor; see (\ref
{A11blocks}). Here and below, the points $(t_{i\pm1},u_{i\pm1})$
(corresponding to indices $\si_{i}$ and $\si_{i}^{-1}$) are equal to
$(y_j,n_j)$ or $(x_j+1,m_j-1)$ for suitable $j$; see
(\ref{A11blocks}).
\end{longlist}

%le5.1 #&#
\begin{lemma}\label{lemmaVedgefacet}
The contribution of the sum (\ref{Vedgefacetsum}) over $I_1$ goes
to zero as \mbox{$N\to\infty$}.\vspace*{-1pt}
\end{lemma}

\begin{pf}
By Lemmas \ref{lemmaedge2} and \ref{lemmaedge3}, we can write
\begin{eqnarray*}
&& \biggl\llvert \sum_{m_i\in I_1} K(t_{i-1},u_{i-1};x_i+1,m_i-1)K(x_i,m_i;t_{i+1},u_{i+1})
\biggr\rrvert
\\
&&\qquad \le \frac{\mathrm{const}}{N^{2/3}}\sum_{m_i\in I_1} \exp \biggl\{-
\mathrm{const} \cdot N^{2/3} \biggl(\eta_{\mathrm{fb}} \biggl(
\frac{x_i}N \biggr)- \frac{m_i}N \biggr) \biggr\}
\\
&&\hspace*{85pt}{} \times \mbox{terms
in $(t_{i\pm1},u_{i\pm1})$}
\\
&&\qquad \le \frac{\mathrm{const}}{N^{1/3}} \times \mbox{terms in $(t_{i\pm1},u_{i\pm1})$}.
\end{eqnarray*}
To get the first estimate above we employ considerations similar to
Corollary~\ref{corKexpansionshifted}; this may change the bound only
by a factor of a constant. The second estimate completes the proof.
\end{pf}

\begin{longlist}[(V; close to edge)]
\item[(V; close to edge)] Summation over $I_2:=\{n_i+1,\ldots,n_i'\}
\cap\{m_i\dvtx c N^{1/3}+N\eta_{\mathrm{fb}} (x_i/N
)\le m_i\le N^{2/3}+N\eta_{\mathrm{fb}} (x_i/N )\}$.
\end{longlist}

%le5.2 #&#
\begin{lemma}\label{lemmaVclosetoedge}
The contribution of the sum (\ref{Vedgefacetsum}), where $m_i$ runs
over $I_2$ instead of $I_1$, also goes to zero as $N\to\infty$.
\end{lemma}

\begin{pf}
We have\vspace*{-1pt}
\begin{eqnarray*}
&& \biggl\llvert \sum_{m_i\in I_2} K(t_{i-1},u_{i-1};x_i+1,m_i-1)K(x_i,m_i;t_{i+1},u_{i+1})
\biggr\rrvert
\\[-1pt]
&&\qquad \le \frac{\mathrm{const}}{N}\sum_{m_i\in I_2}
\frac{1}{|S''_i(\om(i))|} \times \mbox{terms in $(t_{i\pm1},u_{i\pm1})$}.
\end{eqnarray*}
Here we have used a shorthand notation $\om(i):=\om(\frac
{x_i}N,\frac
{m_i}N)$, and the same for $S_i$; see also (\ref{Xi12S12}). Also, as
in the previous lemma we use argument similar to Corollary~\ref
{GFFcorrelations}, but this again may only change the constant in the bound.

Let us bound $|S''_i(\om(i))|$ from below. Observe that the third
derivative $S'''(w)$ is bounded away from zero for small $\Im w$, if
$\Re w$ belongs to a bounded interval to the left of the point $a_1\in
\R
$ from (\ref{scaleAiBi}). Indeed, under the map $\om^{-1}\dvtx\Hb
\to
\D$, such $w$'s are close to the lower left edge and are separated from
the tangent points. Thus, we can write $|S''_i(\om(i))|\ge\mathrm
{const}\cdot\Im\om(i)$.

Next, it can be seen that the imaginary part $\Im\om(i)$ can be
bounded from below by $\mathrm{const}\cdot|\om_\eta(\chi,\eta
)|\cdot
|\frac{m_i}{N}-\eta_{\mathrm{fb}}(\frac{x_i}{N})+O(N^{-1/6})|$,
where $(\chi,\eta)\in\D$ is some intermediate point closer to the edge
than $(\frac{x_i}{N},\frac{m_i}{N})$. Instead of $O(N^{-1/6})$ one
could take any correction term which is asymptotically smaller than
$\frac{m_i}{N}-\eta_{\mathrm{fb}}(\frac{x_i}{N})$. We have [see also
\citet{Petrov2012}, Section~7.6]
\[
\om_\eta=\frac{\om-\chi}{\eta-1+(\om-\chi)(\om-\chi+1-\eta
)\Sigma(\om)},
\]
where
\[
\Sigma(\om)=\sum_{i=1}^{k} \biggl(\frac1{
\om-b_i}-\frac1{\om -a_i} \biggr).
\]
Using the formula for the action (\ref{action}), we see that
%
%e42 #&#
\begin{equation}
\label{ometa} \om_\eta=-\frac{1}{S''(\om(\chi,\eta);\chi,\eta)\cdot(\om
-\chi+1-\eta)}.
\end{equation}

Therefore,
\begin{eqnarray*}
\bigl\llvert S''_i\bigl(\om(i)\bigr)
\bigr\rrvert &\ge&\mathrm{const}\cdot \biggl\llvert \frac{{m_i}/{N}-\eta_{\mathrm{fb}}(
{x_i}/{N})+O(N^{-1/6})}{{S''(\om(\chi,\eta);\chi,\eta)}}\biggr\rrvert
\\
&\ge& \mathrm{const}\cdot N^{{1}/{3}} \biggl\llvert {\frac{m_i}{N}-
\eta_{\mathrm{fb}}\biggl(\frac
{x_i}{N}\biggr)+O\bigl(N^{-1/6}
\bigr)}\biggr\rrvert
\end{eqnarray*}
[we estimate the second derivative in the denominator using (\ref
{xncloseedge})]. We obtain
\begin{eqnarray*}
\frac{\mathrm{const}}{N}\sum_{m_i\in I_2}
\frac{1}{|S''_i(\om(i))|}&\le&
\sum_{m_i\in I_2} \frac{{\mathrm{const}}\cdot N^{-{1}/{3}}}{|{m_i}-N\eta_{\mathrm
{fb}}({x_i}/{N})+O(N^{5/6})|} \\
&\le& \mathrm{const}
\cdot N^{-{1}/{3}}\ln N.
\end{eqnarray*}
This completes the proof.
\end{pf}

It is not hard to see from Lemmas \ref{lemmaVedgefacet} and \ref
{lemmaVclosetoedge} that in the two remaining cases \textup
{(V; bulk)} and \textup{(H)} (see below) we may assume that all the
$\ell$ variables we are summing over (which correspond to vertical and
horizontal summations) always belong to the bulk of the system in the
sense of (\ref{xnliquid}).

\begin{longlist}[(V; bulk)]
\item[(V; bulk)] Summation over $I_3:=\{n_i+1,\ldots,n_i'\}\cap\{
m_i\dvtx m_i\ge N^{2/3}+\break  N\eta_{\mathrm{fb}} (x_i/N
)\}
$. From what was said right above, we may as well take $I_3=\{
n_i+1,\ldots,n_i'\}$. We will investigate the asymptotics of (\ref
{Vedgefacetsum}) where now $m_i$ runs over $I_3$ instead of $I_1$.
\end{longlist}

Similarly to the proof of Lemma \ref{lemmaVclosetoedge}, let us
denote $\om(i)=\om(\frac{x_i}N,\frac{m_i}N)$, $\om(i\pm1):=\om
(\frac
{t_{i\pm1}}N,\frac{u_{i\pm1}}N)$, and same for $S_i,\Xi_i$ [see also
(\ref{Xi12S12})]. Also, let $\be_z(i)$ denote the argument of the
tangent vector to the $z$ contour at the point $\om(i)$ as on
Figure~\ref{figIm}, and analogously for $\be_w(i)$. It can be readily
checked [using the global structure of the $z$ and $w$ contours
(Section~\ref{submovingthecontours})] that
%
%e43 #&#
\begin{eqnarray}\label{betazwproperties}
\be_z(i)&=&\be_w(i)+\frac\pi2\quad \mbox{and}
\nonumber
\\[-8pt]
\\[-8pt]
\nonumber
\be_z(i)+\be_w(i)+\arg S''_i
\bigl(\om(i)\bigr)&=&\frac{3\pi}2+2\pi q \qquad \mbox{(for some $q\in\Z$)}.
\end{eqnarray}

By the nature of our paths (on Figure~\ref{figGFFpaths}), the points
$(t_{i\pm1},u_{i\pm1})$ are sufficiently far from $(x_i,m_i)$. Thus we
may use Proposition \ref{prop4summands} and Corollary \ref
{corKexpansionshifted} and write (here and below $\delta>0$ is
sufficiently small and fixed)
%
%e44 #&#
\begin{eqnarray}\label{16summands}
&&  -\sum_{m_i\in I_3}
K(t_{i-1},u_{i-1};x_i+1,m_i-1)K(x_i,m_i;t_{i+1},u_{i+1})
\nonumber\\
&&\qquad =
-\frac{1+O(N^{-\de/2})}{2\pi N\nonumber}\\
&&\qquad\quad{}\times\sum_{m_i\in I_3}
\frac{1}{|S_i''(\om(i))|}
\nonumber\\
&&\hspace*{36pt}\qquad\quad{}\times \biggl\{
\biggl[ \frac{e^{\i\be_z(i)}} {
\om(i-1)-\om(i)}\cdot \frac{e^{\i\be_w(i)}} {
\om({i})-\om(i+1)}
\frac{1}{\Xi_i(\om(i))} \frac{\om(i)-x_i/N}{1-m_i/N}
\nonumber\\
&&\hspace*{57pt}\qquad\quad{}-
\frac{e^{\i\be_z(i)}} {
\om(i-1)-\om(i)}\cdot \frac{e^{-\i\be_w(i)}} {
\omb({i})-\om(i+1)} \frac{e^{-2N\i\cdot\Im S_{i}(\om(i))}}{|\Xi_i(\om(i))|}
\\
&&\hspace*{57pt}\qquad\quad{}\times\frac{\om(i)-x_i/N}{1-m_i/N}-
\frac{e^{-\i\be_z(i)}} {
\om(i-1)-\omb(i)}\cdot \frac{e^{\i\be_w(i)}} {
\om({i})-\om(i+1)}\nonumber\\
&&\hspace*{57pt}\qquad\quad{}\times \frac{e^{2N\i\cdot\Im S_{i}(\om(i))}}{|\Xi_i(\om(i))|}
\frac{\omb(i)-x_i/N}{1-m_i/N}+
\frac{e^{-\i\be_z(i)}} {
\om(i-1)-\omb(i)}\nonumber\\
&&\hspace*{126pt}\qquad\quad{}\times \frac{e^{-\i\be_w(i)}} {
\omb({i})-\om(i+1)} \frac{1}{\Xi_i(\omb(i))}
\frac{\omb(i)-x_i/N}{1-m_i/N} \biggr]
\nonumber
\\
&&\hspace*{180pt}\qquad\quad{} \times \bigl(\mbox{terms in $(t_{i\pm1},u_{i\pm1})$}\bigr)
{}+{}\circlearrowleft \biggr\}.
\nonumber
\end{eqnarray}
Here and below $\circlearrowleft$ denotes all additional terms (there
are 12 of them in the above formula) which are obtained by replacing
$\om(i-1)$ and/or $\om(i+1)$ by the corresponding complex conjugate points.

Arguing analogously to Section~5.3 in \citet{Ferrari2008} [case (a/3)],
one can show that the contribution of the oscillating terms above
containing $e^{\pm2N\i\cdot\Im S_{i}(\om(i))}$ becomes negligible [of
order $O(N^{-{1}/{3}+\varepsilon})$] in the limit. The remaining terms
are smooth and change over distances $m_i\sim N$. Therefore, up to an
error of order $O(N^{-{1}/{3}})$, we can replace the summation over
$m_i$ in (\ref{16summands}) by integration in the scaled variables.
Namely, setting $\mu:=m_i/N$, $\eta:=(n_i+1)/N$, $\eta':=n_i'/N$,
$\chi:=x_i/N$, we can rewrite (\ref{16summands}) as
\begin{eqnarray}
&& \label{8summandsintegral} -\frac{1+O(N^{-\de/2})}{2\pi} \int_{\eta}^{\eta'}\,d
\mu \frac{1}{|S_i''(\om(i))|}
\nonumber\\
&&\qquad
\times \biggl\{ \biggl[ \frac{e^{\i\be_z(i)}} {
\om(i-1)-\om(i)}\cdot \frac{e^{\i\be_w(i)}} {
\om({i})-\om(i+1)}
\frac{1}{\Xi_i(\om(i))} \frac{\om(i)-\chi}{1-\mu}
\nonumber
\\[-8pt]
\\[-8pt]
\nonumber
&&\hspace*{9pt}\qquad\quad{}+
\nonumber
\frac{e^{-\i\be_z(i)}} {
\om(i-1)-\omb(i)}\cdot \frac{e^{-\i\be_w(i)}} {
\omb({i})-\om(i+1)} \frac{1}{\Xi_i(\omb(i))}
\frac{\omb(i)-\chi}{1-\mu} \biggr]
\\
&&\hspace*{130pt}\qquad\quad{} \times
\nonumber
\bigl(\mbox{terms in $(t_{i\pm1},u_{i\pm1})$}
\bigr) {}+{}\circlearrowleft \biggr\}.
\end{eqnarray}

The next step we perform is a change of variables. For the term with
$\om(i)$, we set $\zeta_i^+:=\om(i)=\om(\chi,\mu)$. The integration
path $\Gamma_{i}^{+}$ for $\zeta_i^+$ is from $\om(\chi,\eta)$ to
$\om
(\chi,\eta')$, that is, $\Gamma_{i}^{+}$ is the image of the vertical
line from $\eta$ to $\eta'$ in $\D$ under the map $\om\dvtx\D\to
\Hb$.
Form the results of \citet{Petrov2012}, we have [see also (\ref{ometa})]
\[
\frac{\partial\zeta_i^+}{\partial\mu}= -\frac{1}{S''_i(\om(i))\cdot\Xi_i(\om(i))}\frac{\om(i)-\chi
}{1-\mu}.
\]
Symmetrically, for the term with $\omb(i)$, let $\zeta_i^-:=\omb
(i)=\omb
(\chi,\mu)$, and the integration path $\Gamma_{i}^{-}$ for $\zeta_i^-$
is conjugate to that for $\zeta_i^+$. It can be readily verified [in
particular, using (\ref{betazwproperties})] that the above integral
(\ref{8summandsintegral}) can be rewritten as the sum of two integrals,
%
%e45 #&#
\begin{eqnarray}\label{Vbulkresult}
 &&\frac{1}{2\pi\i}\sum_{\varepsilon_i=\pm} \varepsilon_i
\int_{\Gamma_i^{\varepsilon_i}} \,d\zeta_i^{\varepsilon_i} \biggl[
\frac{1}{\om(i-1)-\zeta_i^{\varepsilon_i}} \frac1{\zeta_i^{\varepsilon_i}-\om(i+1)}
\nonumber
\\[-8pt]
\\[-8pt]
\nonumber
&&\hspace*{109pt}{}\times
\mbox{terms in $(t_{i\pm1},u_{i\pm1})$} {}+{}\circlearrowleft
\biggr].
\end{eqnarray}
There is an additional minus sign for $\Gamma_i^{-}$ because of a
different phase $-(\be_z(i)+\be_w(i))$ in the second summand in (\ref
{8summandsintegral}). Thus we have established the following fact:

%
%pr5.3 #&#
\begin{proposition}\label{propVbulk}
The sum (\ref{Vedgefacetsum}), where $m_i$ runs over $I_3$ instead
of $I_1$, is [up to a factor of $1+O(N^{-\de/2})$, where $\de>0$ is
a fixed sufficiently small constant] equal to the sum of two integrals
(\ref{Vbulkresult}).
\end{proposition}

\begin{longlist}[(H)]
\item[(H)] The summation related to the index $i$ is performed over a
horizontal segment, $\{(y_i,n_i)\dvtx y_i=x_i+1,\ldots,x_i'\}$.
%%Horizontal segments of our paths (Fig. \ref{figGFFpaths}) always
%belong to the bulk.
\end{longlist}

%pr5.4 #&#
\begin{proposition}\label{propHbulk}
The horizontal sum has the following asymptotics:
\begin{eqnarray*}
&& \sum_{y_i=x_i+1}^{x_i'}K(t_{i-1},u_{i-1};y_i,n_i)K(y_i,n_i;t_{i+1},u_{i+1})
\\
&&\qquad= \frac{1+O(N^{-\de/2})}{2\pi\i}
\\
&&\quad\qquad{}\times \sum_{\varepsilon_i=\pm} \varepsilon_i \int
_{\Gamma_i^{\varepsilon_i}} \,d\zeta_i^{\varepsilon_i} \biggl[
\frac{1}{\om(i-1)-\zeta_i^{\varepsilon_i}} \frac1{\zeta_i^{\varepsilon_i}-\om(i+1)}\\
&&\hspace*{131pt}{} \times
\mbox{terms in $(t_{i\pm1},u_{i\pm1})$} {}+{}\circlearrowleft
\biggr],
\end{eqnarray*}
where all the notation is as in the previous case, except that now the
path of integration $\Gamma_i^{+}$ connects $\om(\frac
{x_i+1}N,\frac
{n_i}N)$ and $\om(\frac{x_i'}N,\frac{n_i}N)$, and $\Gamma_i^-$ is the
conjugate of $\Gamma_i^+$.
\end{proposition}

\begin{pf}
This is established in the same way as Proposition \ref{propVbulk}.
\end{pf}

After considering the four above cases, we conclude this subsection
with the desired statement about permutations $\si\in\mathfrak
{S}(\spt
)$ having cycles of length $\ell\ge3$:

%
%pr5.5 #&#
\begin{proposition}\label{propcycle3}
Consider one $\spt$-fold sum (\ref{sfoldsumagain}) and expand the
$\spt\times\spt$ determinant as a sum over permutations $\si\in
\mathfrak
{S}(\spt)$. Then the contribution of those $\si$'s having cycles of
length $\ell\ge3$ goes to zero as $N\to\infty$.
\end{proposition}

\begin{pf}
From Lemmas \ref{lemmaVedgefacet}, \ref{lemmaVclosetoedge} and
Propositions \ref{propVbulk}, \ref{propHbulk} it follows that each
cycle $j_1\to j_2\to\cdots\to j_\ell\to j_1$ in $\si$ asymptotically
produces the following sum of $\ell$-fold integrals:
\begin{eqnarray*}
\frac{1}{(2\pi\i)^{\ell}} \sum_{\varepsilon_{1},\ldots,\varepsilon_{\ell}=\pm}
\varepsilon_{1}\cdots\varepsilon_{\ell} \int
_{\Gamma_1^{\varepsilon_{1}}} \,d\zeta_{1}^{\varepsilon_{1}} \cdots \int
_{\Gamma_\ell^{\varepsilon_{\ell}}} \,d\zeta_{\ell}^{\varepsilon_{\ell}} \prod
_{i=1}^{\ell}\frac{1} {
z_{j_{i}}^{\varepsilon_{j_i}}-
{z_{j_{i+1}}^{\varepsilon_{j_{i+1}}}}}.
\end{eqnarray*}
On the other hand, by \citet{Kenyon2004Height}, Lemma 7.3, we have
\[
\sum_{\mathrm{all\ \ell\mbox{-}cycles\ \tau\in\mathfrak{S}(\ell)}} \prod_{i=1}^{\ell}
\frac{1}{U_{\tau_i}-U_{\tau_{i+1}}}=0,\qquad \ell\ge3.
\]
This concludes the proof.
\end{pf}

% subsection contribution_from_permutations_with_cycles_of_length_ge3_
%(end)

%s5.3 #&#
\subsection{Contribution of fixed-point-free involutions} % (fold)
\label{subcontributionfromfixedpointfreeinvolutions}

In Sections~\ref{subexpandingdeterminantsin}--\ref
{subcontributionfrompermutationswithcyclesoflengthge3} we have
shown that if one expands the determinant in (\ref{sfoldsumagain})
as a sum over permutations $\si\in\mathfrak{S}(\spt)$, then the
contribution of permutations $\si$ which are not fixed-point-free
involutions ($={}$pairings) becomes negligible in the limit. Collecting
all summands of the form (\ref{sfoldsumagain}) corresponding to the
expectation $\E (H_N(\chi_1,\eta_1 )\cdots H_N(\chi_\spt,\eta
_\spt
) )$ [with pairwise distinct positions $(\chi_1,\eta_1),\ldots,\break (\chi
_\spt,\eta_\spt)$], we see that
\begin{eqnarray*}
 &&\E \bigl(H_N(\chi_1,\eta_1 )\cdots
H_N(\chi_\spt,\eta_\spt) \bigr)\\
&&\qquad=\bigl(1+O
\bigl(N^{-{\de}/{2}}\bigr)\bigr)\\
&&\qquad\quad{}\times\sum_{{\mathrm{pairings}\ \si\in\mathfrak
{S}(\spt
)}} \prod
_{i=1}^{\spt/2} \frac{1}{(2\pi\i)^{2}}
\\
&&\hspace*{118pt}{}\times \int_{\omb(\chi_{\si(2i-1)},\eta_{\si(2i-1)})} ^{\om(\chi_{\si(2i-1)},\eta_{\si(2i-1)})} \,d\zeta_{2i-1}
\int_{\omb(\chi_{\si(2i)},\eta_{\si(2i)})} ^{\om(\chi_{\si(2i)},\eta_{\si(2i)})}
\,d\zeta_{2i}\\
&&\hspace*{118pt}{}\times\frac{1}{(\zeta_{2i-1}-\zeta_{2i})^{2}}.
\end{eqnarray*}
Note the additional minus sign coming from the signature of each
transposition. The paths of integration in $\Hb$ from $\omb(\chi
_{\si
(j)},\eta_{\si(j)})$ to $\om(\chi_{\si(j)},\eta_{\si(j)})$ are obtained
by linearity (Section~\ref{subpathstotheboundary}) and by symmetry
of the contours $\Gamma^{\pm}_j$ in Propositions \ref{propVbulk},
\ref
{propHbulk}.

Each integral above can be explicitly evaluated,
\begin{eqnarray*}
&&\frac{1}{(2\pi\i)^{2}} \int_{\omb_1} ^{\om_1} \,d
\zeta_1 \int_{\omb_2} ^{\om_2} \,d
\zeta_2 \frac{1}{(\zeta_1-\zeta_2)^{2}}\\
&&\qquad =-\frac{1}{4\pi^{2}} \ln \biggl(
\frac{(\om_1-\om_2)({\omb_1-\omb_2})} {
(\om_1-\omb_2)(\omb_1-\om_2)} \biggr)= \frac{\G(\om_1,\om_2)}{\pi},
\end{eqnarray*}
where $\G$ is the Green function (\ref{Greenfunction}). With this step
we have completed the proof of Theorem \ref{thmmomentconvergenceintro}.

% subsection contribution_from_fixed_point_free_involutions (end)

%s5.4 #&#
\subsection{Convergence to $\GFF$: Proof of Theorem \texorpdfstring{\protect\ref{thmweakconvergenceintro}}{1.3}} % (fold)
\label{subconvergencetogffproofoftheoremthmweakconvergenceintro}

Our aim now is to prove the weak convergence of $\sqrt\pi H_N(\chi,\eta
)$ (viewed as a generalized function on $\D$) to the $\om$-pullback of
the Gaussian free field $\GFF$ on $\Hb$; see Sections~\ref{subgaussianfreefield}--\ref{subresults}. In order to do that, we
need an additional estimate:

%le5.6 #&#
\begin{lemma}\label{lemmaN^epsilonbound}
For any $\varepsilon>0$ and any $\spt$ points $(\chi_1,\eta_1),\ldots,(\chi_\spt,\eta_\spt)\in\D$ (not necessarily pairwise distinct) we
have the bound
\[
\E \bigl(H_N(\chi_1,\eta_1 )\cdots
H_N(\chi_\spt,\eta_\spt) \bigr)= O
\bigl(N^{\varepsilon}\bigr),\qquad  N\to\infty.
\]
\end{lemma}

\begin{pf}
If all the points are distinct, we have a better bound $O(1)$ by
Theorem~\ref{thmmomentconvergenceintro}. Next, assume that, say,
$(\chi_1,\eta_1)=(\chi_2,\eta_2)$. Connect $(\chi_1,\eta_1)$ with the
lower left edge by two paths which are close to each other only in a
neighborhood of $(\chi_1,\eta_1)$. As explained in Section~\ref{secheightfunctionanditsmultipointfluctuations}, we calculate
$(H_N(\chi_1,\eta_1))^{2}$ as a product of sums over these two paths.
Fix small $\de>0$, and in each sum consider separately the $N^{{1}/{2}+\delta}$ terms corresponding to $N^{-{1}/{2}+\delta
}$-neighbor\-hood of $(\chi_1,\eta_1)$. All other terms give a
contribution of order $O(1)$ because they involve points which are far
apart, and so one can argue similarly to Proposition~\ref
{prop4summands} and Theorem \ref{thmmomentconvergenceintro}. On the
other hand, the terms corresponding to close points are estimated using
Proposition~\ref{propsingleintegralestimate} and Lemma \ref
{lemmaI2whentwopointsareclose}. The former gives growth of order
$O(\ln N)^{2}$ coming from the single integral in the correlation
kernel (\ref{Kasymptoticsaftertransform}), and the latter provides a
bound $O(N^{2\delta})$ which comes from the double integral in (\ref
{Kasymptoticsaftertransform}). This completes the proof. See also
the end of Section~7 in \citet{Kenyon2004Height}.
\end{pf}

Now we finish the proof of Theorem \ref{thmweakconvergenceintro}. We
argue similarly to \citet{Ferrari2008}, Section~5.5. It suffices to
establish that [see (\ref{weakconvergenceofonefunction})]
\begin{eqnarray}\label{GFFweakproof1}
&& \lim_{N\to\infty}\pi^{\spt/2}\E \Biggl( \prod
_{i=1}^{\spt} \int_{\D}
\phi_i(\chi_i,\eta_i)H_N(
\chi_i,\eta_i)\,d\chi_i d
\eta_i \Biggr)
\nonumber
\\[-8pt]
\\[-8pt]
\nonumber
&&\qquad=
\nonumber
\E \Biggl( \prod_{i=1}^{\spt}
\int_{\Hb} \phi_i\bigl(\om^{-1}(z_i)
\bigr)J(z_i)\GFF(z_i)|dz_i|^{2}
\Biggr),
\end{eqnarray}
where $\phi_1,\ldots,\phi_{\spt}$ are smooth compactly supported test
functions on $\D$. This convergence of moments implies the weak
convergence (\ref{weakconvergenceofonefunction}) of $\sqrt\pi
\int_{\D}\phi(\chi,\eta)\times  H_N(\chi,\eta)\,d\chi \,d\eta$ to the corresponding
Gaussian random variable\break $\int_{\Hb}\phi(\om^{-1}(z))\times  J(z)\GFF
(z)|dz|^{2}$, as well as an obvious multidimensional analogue of this
fact involving convergence to a Gaussian vector.

The left-hand side of (\ref{GFFweakproof1}) is equal to
\[
\int_{\Hb^{\spt}} \prod_{i=1}^{\spt}|dz_i|^{2}
\bigl(\phi_i\bigl(\om^{-1}(z_i)
\bigr)J(z_i) \bigr) \E \bigl( H_N\bigl(
\om^{-1}(z_1)\bigr) \cdots H_N\bigl(
\om^{-1}(z_\spt)\bigr) \bigr).
\]
We split this integration over $\Hb^{\spt}$ into two parts, one where
the points $z_j$ are sufficiently far apart,
\begin{eqnarray*}
\Hb^{\spt}_{\de}:= \bigl\{ (z_1,
\ldots,z_\spt)\in\Hb^{\spt}\dvtx |z_i-z_j|
\ge N^{-1/2+\delta}, 1\le i<j\le\spt \bigr\},
\end{eqnarray*}
and the remaining part $\Hb^{\spt}\setminus\Hb^{\spt}_{\de}$
where some
of them are close. As usual, $\de>0$ is small and fixed.

For the integration over $\Hb^{\spt}_{\delta}$ we use Propositions
\ref
{propVbulk} and \ref{propHbulk} to write
\begin{eqnarray*}
&& \int_{\Hb^{\spt}_\de} \prod_{i=1}^{\spt}|dz_i|^{2}
\bigl(\phi_i\bigl(\om^{-1}(z_i)
\bigr)J(z_i) \bigr) \E \bigl( H_N\bigl(
\om^{-1}(z_1)\bigr) \cdots H_N\bigl(
\om^{-1}(z_\spt)\bigr) \bigr)
\\
&&\qquad= \int_{\Hb^{\spt}_\de} \prod_{i=1}^{\spt}|dz_i|^{2}
\bigl(\phi_i\bigl(\om^{-1}(z_i)
\bigr)J(z_i) \bigr) \E \bigl( \GFF(z_1) \cdots
\GFF(z_\spt) \bigr)+O\bigl(N^{-\de/2}\bigr).
\end{eqnarray*}
Since the logarithms in $\E (\GFF(z_1)\cdots\GFF(z_\spt
) )$
(see Section~\ref{subgaussianfreefield}) are integrable around zero,
we may replace the above integral over $\Hb^{\spt}_\de$ by the same
integral over~$\Hb^{\spt}$.

The integral over the complement $\Hb^{\spt}\setminus\Hb^{\spt
}_{\delta
}$ is bounded using Lemma \ref{lemmaN^epsilonbound},
\begin{eqnarray*}
& &\Biggl\llvert \int_{\Hb^{\spt}\setminus\Hb^{\spt}_\de} \prod
_{i=1}^{\spt}|dz_i|^{2} \bigl(
\phi_i\bigl(\om^{-1}(z_i)
\bigr)J(z_i) \bigr) \E \bigl( H_N\bigl(
\om^{-1}(z_1)\bigr) \cdots H_N\bigl(
\om^{-1}(z_\spt)\bigr) \bigr) \Biggr\rrvert
\\
&&\qquad\le \mathrm{const}\cdot\bigl(N^{-1/2+\de}\bigr)^{2}N^{\varepsilon},
\end{eqnarray*}
where the constant depends only on our test functions $\phi_j$. This
last estimate implies the desired convergence (\ref{GFFweakproof1}),
and completes the proof of Theorem~\ref{thmweakconvergenceintro}.

\section*{Acknowledgments} % (fold)
\label{subacknowledgements}

I would like to thank Alexei Borodin for interest in my work and many
fruitful discussions, and Vadim Gorin, Alexey Bufetov and Jeffrey Kuan
for useful comments.

% subsection acknowledgements (end)

% subsection
%convergence_to_gff_proof_of_theorem_thm:weak_convergence_intro (end)

% section completing_the_proofs (end)

%
% imsref loaded by akundreckaite, 2013-04-25 08:48:58
% imsref loaded by akundreckaite, 2013-04-25 08:58:41
%
% imsref loaded by akundreckaite, 2013-04-25 12:54:20
% imsref loaded by akundreckaite, 2013-05-02 13:11:45
% imsref loaded by akundreckaite, 2014-09-23 11:59:43

% zodis "Acknowledgments" paliekamas pagal autoriu

%suskaldyti doi

\printaddresses


\begin{thebibliography}{25}

%b1 ###
%b1 #&#
\bibitem[\protect\citeauthoryear{Baik et~al.}{2007}]{BKMM2003}
\begin{bbook}[mr]
\bauthor{\bsnm{Baik},~\bfnm{J.}\binits{J.}},
 \bauthor{\bsnm{Kriecherbauer},~\bfnm{T.}\binits{T.}},
 \bauthor{\bsnm{McLaughlin},~\bfnm{K.~T.~R.}\binits{K.~T.~R.}} \AND
 \bauthor{\bsnm{Miller},~\bfnm{P.~D.}\binits{P.~D.}}
(\byear{2007}).
\btitle{Discrete Orthogonal Polynomials: Asymptotics and Applications}.
\bseries{Annals of Mathematics Studies}
\bvolume{164}.
\bpublisher{Princeton Univ. Press}, \blocation{Princeton, NJ}.
\bid{mr={2283089}}
\bptok{imsref}%
\end{bbook}
\endbibitem

%b2 ###
%b2 #&#
\bibitem[\protect\citeauthoryear{Borodin}{2011}]{Borodin2009}
\begin{bincollection}[author]
\bauthor{\bsnm{Borodin},~\bfnm{A.}\binits{A.}}
(\byear{2011}).
\btitle{Determinantal point processes}.
In \bbooktitle{Oxford Handbook of Random Matrix Theory}
(\beditor{\bfnm{G.}\binits{G.}~\bsnm{Akemann}},
 \beditor{\bfnm{J.}\binits{J.}~\bsnm{Baik}} \AND
 \beditor{\bfnm{P.}\binits{P.}~\bsnm{Di~Francesco}}, eds.).
\bpublisher{Oxford Univ. Press}, \blocation{Oxford}.
\bptok{imsref}%
\end{bincollection}
\endbibitem

%b3 ###
%b3 #&#
\bibitem[\protect\citeauthoryear{Borodin and Ferrari}{2008}]{Ferrari2008}
\begin{bmisc}[author]
\bauthor{\bsnm{Borodin},~\bfnm{A.}\binits{A.}} \AND
 \bauthor{\bsnm{Ferrari},~\bfnm{P.~L.}\binits{P.~L.}}
(\byear{2008}).
\bhowpublished{Anisotropic growth of random surfaces in $2+1$ dimensions.
 Available at arXiv:\arxivurl{0804.3035} [math-ph]}.
\bptok{imsref}%
\end{bmisc}
\endbibitem

%b4 ###
%b4 #&#
\bibitem[\protect\citeauthoryear{Borodin, Ferrari and
 Sasamoto}{2008}]{Borodin2008TASEPII}
\begin{barticle}[mr]
\bauthor{\bsnm{Borodin},~\bfnm{Alexei}\binits{A.}},
 \bauthor{\bsnm{Ferrari},~\bfnm{Patrik~L.}\binits{P.~L.}} \AND
 \bauthor{\bsnm{Sasamoto},~\bfnm{Tomohiro}\binits{T.}}
(\byear{2008}).
\btitle{Large time asymptotics of growth models on space-like paths. {II}.
 {PNG} and parallel {TASEP}}.
\bjournal{Comm. Math. Phys.}
\bvolume{283}
\bpages{417--449}.
\bid{doi={10.1007/s00220-008-0515-4}, issn={0010-3616}, mr={2430639}}
\bptok{imsref}%
\end{barticle}
\endbibitem

%b5 ###
%b5 #&#
\bibitem[\protect\citeauthoryear{Borodin and Gorin}{2009}]{BorodinGorin2008}
\begin{barticle}[mr]
\bauthor{\bsnm{Borodin},~\bfnm{Alexei}\binits{A.}} \AND
 \bauthor{\bsnm{Gorin},~\bfnm{Vadim}\binits{V.}}
(\byear{2009}).
\btitle{Shuffling algorithm for boxed plane partitions}.
\bjournal{Adv. Math.}
\bvolume{220}
\bpages{1739--1770}.
\bid{doi={10.1016/j.aim.2008.11.008}, issn={0001-8708}, mr={2493180}}
\bptok{imsref}%
\end{barticle}
\endbibitem

%b6 ###
%b6 #&#
\bibitem[\protect\citeauthoryear{Borodin, Gorin and
 Rains}{2010}]{borodin-gr2009q}
\begin{barticle}[mr]
\bauthor{\bsnm{Borodin},~\bfnm{Alexei}\binits{A.}},
 \bauthor{\bsnm{Gorin},~\bfnm{Vadim}\binits{V.}} \AND
 \bauthor{\bsnm{Rains},~\bfnm{Eric~M.}\binits{E.~M.}}
(\byear{2010}).
\btitle{{$q$}-distributions on boxed plane partitions}.
\bjournal{Selecta Math. (N.S.)}
\bvolume{16}
\bpages{731--789}.
\bid{doi={10.1007/s00029-010-0034-y}, issn={1022-1824}, mr={2734330}}
\bptok{imsref}%
\end{barticle}
\endbibitem

%b7 ###
%b7 #&#
\bibitem[\protect\citeauthoryear{Borodin, Okounkov and
 Olshanski}{2000}]{Borodin2000b}
\begin{barticle}[mr]
\bauthor{\bsnm{Borodin},~\bfnm{Alexei}\binits{A.}},
 \bauthor{\bsnm{Okounkov},~\bfnm{Andrei}\binits{A.}} \AND
 \bauthor{\bsnm{Olshanski},~\bfnm{Grigori}\binits{G.}}
(\byear{2000}).
\btitle{Asymptotics of {P}lancherel measures for symmetric groups}.
\bjournal{J. Amer. Math. Soc.}
\bvolume{13}
\bpages{481--515 (electronic)}.
\bid{doi={10.1090/S0894-0347-00-00337-4}, issn={0894-0347}, mr={1758751}}
\bptok{imsref}%
\end{barticle}
\endbibitem

%b8 ###
%b8 #&#
\bibitem[\protect\citeauthoryear{Cohn, Kenyon and
 Propp}{2001}]{CohnKenyonPropp2000}
\begin{barticle}[mr]
\bauthor{\bsnm{Cohn},~\bfnm{Henry}\binits{H.}},
 \bauthor{\bsnm{Kenyon},~\bfnm{Richard}\binits{R.}} \AND
 \bauthor{\bsnm{Propp},~\bfnm{James}\binits{J.}}
(\byear{2001}).
\btitle{A variational principle for domino tilings}.
\bjournal{J.~Amer. Math. Soc.}
\bvolume{14}
\bpages{297--346 (electronic)}.
\bid{doi={10.1090/S0894-0347-00-00355-6}, issn={0894-0347}, mr={1815214}}
\bptok{imsref}%
\end{barticle}
\endbibitem

%b9 ###
%b9 #&#
\bibitem[\protect\citeauthoryear{Cohn, Larsen and
 Propp}{1998}]{CohnLarsenPropp}
\begin{barticle}[mr]
\bauthor{\bsnm{Cohn},~\bfnm{Henry}\binits{H.}},
 \bauthor{\bsnm{Larsen},~\bfnm{Michael}\binits{M.}} \AND
 \bauthor{\bsnm{Propp},~\bfnm{James}\binits{J.}}
(\byear{1998}).
\btitle{The shape of a typical boxed plane partition}.
\bjournal{New York J. Math.}
\bvolume{4}
\bpages{137--165 (electronic)}.
\bid{issn={1076-9803}, mr={1641839}}
\bptok{imsref}%
\end{barticle}
\endbibitem

%b10 ###
%b10 #&#
\bibitem[\protect\citeauthoryear{Destainville}{1998}]{Destainville1998Lozenge}
\begin{barticle}[mr]
\bauthor{\bsnm{Destainville},~\bfnm{N.}\binits{N.}}
(\byear{1998}).
\btitle{Entropy and boundary conditions in random rhombus tilings}.
\bjournal{J.~Phys. A}
\bvolume{31}
\bpages{6123--6139}.
\bid{doi={10.1088/0305-4470/31/29/005}, issn={0305-4470}, mr={1637731}}
\bptok{imsref}%
\end{barticle}
\endbibitem

%b11 ###
%b11 #&#
\bibitem[\protect\citeauthoryear{Destainville, Mosseri and
 Bailly}{1997}]{DMB1997Lozenge}
\begin{barticle}[mr]
\bauthor{\bsnm{Destainville},~\bfnm{N.}\binits{N.}},
 \bauthor{\bsnm{Mosseri},~\bfnm{R.}\binits{R.}} \AND
 \bauthor{\bsnm{Bailly},~\bfnm{F.}\binits{F.}}
(\byear{1997}).
\btitle{Configurational entropy of codimension-one tilings and directed
 membranes}.
\bjournal{J. Stat. Phys.}
\bvolume{87}
\bpages{697--754}.
\bid{doi={10.1007/BF02181243}, issn={0022-4715}, mr={1459040}}
\bptok{imsref}%
\end{barticle}
\endbibitem

%b12 ###
%b12 #&#
\bibitem[\protect\citeauthoryear{Duits}{2011}]{Duits2011GFF}
\begin{bmisc}[author]
\bauthor{\bsnm{Duits},~\bfnm{M.}\binits{M.}}
(\byear{2011}).
\bhowpublished{The Gaussian free field in an interlacing particle system with
 two jump rates. Available at arXiv:\arxivurl{1105.4656} [math-ph]}.
\bptok{imsref}%
\end{bmisc}
\endbibitem

%b13 ###
%b13 #&#
\bibitem[\protect\citeauthoryear{Gorin}{2008}]{Gorin2007Hexagon}
\begin{barticle}[mr]
\bauthor{\bsnm{Gorin},~\bfnm{V.~E.}\binits{V.~E.}}
(\byear{2008}).
\btitle{Nonintersecting paths and the {H}ahn orthogonal polynomial ensemble}.
\bjournal{Funct. Anal. Appl.}
\bvolume{42}
\bpages{180--197}.
\bptok{imsref}%
\end{barticle}
\endbibitem

%b14 ###
%b14 #&#
\bibitem[\protect\citeauthoryear{Hough et~al.}{2006}]{peres2006determinantal}
\begin{barticle}[mr]
\bauthor{\bsnm{Hough},~\bfnm{J.~Ben}\binits{J.~B.}},
 \bauthor{\bsnm{Krishnapur},~\bfnm{Manjunath}\binits{M.}},
 \bauthor{\bsnm{Peres},~\bfnm{Yuval}\binits{Y.}} \AND
 \bauthor{\bsnm{Vir{\'a}g},~\bfnm{B{\'a}lint}\binits{B.}}
(\byear{2006}).
\btitle{Determinantal processes and independence}.
\bjournal{Probab. Surv.}
\bvolume{3}
\bpages{206--229}.
\bid{doi={10.1214/154957806000000078}, issn={1549-5787}, mr={2216966}}
\bptok{imsref}%
\end{barticle}
\endbibitem

%b15 ###
%b15 #&#
\bibitem[\protect\citeauthoryear{Kenyon}{2008}]{Kenyon2004Height}
\begin{barticle}[mr]
\bauthor{\bsnm{Kenyon},~\bfnm{Richard}\binits{R.}}
(\byear{2008}).
\btitle{Height fluctuations in the honeycomb dimer model}.
\bjournal{Comm. Math. Phys.}
\bvolume{281}
\bpages{675--709}.
\bid{doi={10.1007/s00220-008-0511-8}, issn={0010-3616}, mr={2415464}}
\bptok{imsref}%
\end{barticle}
\endbibitem

%b16 ###
%b16 #&#
\bibitem[\protect\citeauthoryear{Kenyon}{2009}]{Kenyon2007Lecture}
\begin{bincollection}[mr]
\bauthor{\bsnm{Kenyon},~\bfnm{Richard}\binits{R.}}
(\byear{2009}).
\btitle{Lectures on dimers}.
In \bbooktitle{Statistical Mechanics}.
\bseries{IAS/Park City Math. Ser.}
\bvolume{16}
\bpages{191--230}.
\bpublisher{Amer. Math. Soc.}, \blocation{Providence, RI}.
\bid{mr={2523460}}
\bptok{imsref}%
\end{bincollection}
\endbibitem

%b17 ###
%b17 #&#
\bibitem[\protect\citeauthoryear{Kenyon and
 Okounkov}{2007}]{OkounkovKenyon2007Limit}
\begin{barticle}[mr]
\bauthor{\bsnm{Kenyon},~\bfnm{Richard}\binits{R.}} \AND
 \bauthor{\bsnm{Okounkov},~\bfnm{Andrei}\binits{A.}}
(\byear{2007}).
\btitle{Limit shapes and the complex {B}urgers equation}.
\bjournal{Acta Math.}
\bvolume{199}
\bpages{263--302}.
\bid{doi={10.1007/s11511-007-0021-0}, issn={0001-5962}, mr={2358053}}
\bptok{imsref}%
\end{barticle}
\endbibitem

%b18 ###
%b18 #&#
\bibitem[\protect\citeauthoryear{Kenyon, Okounkov and
 Sheffield}{2006}]{KOS2006}
\begin{barticle}[mr]
\bauthor{\bsnm{Kenyon},~\bfnm{Richard}\binits{R.}},
 \bauthor{\bsnm{Okounkov},~\bfnm{Andrei}\binits{A.}} \AND
 \bauthor{\bsnm{Sheffield},~\bfnm{Scott}\binits{S.}}
(\byear{2006}).
\btitle{Dimers and amoebae}.
\bjournal{Ann. of Math. (2)}
\bvolume{163}
\bpages{1019--1056}.
\bid{doi={10.4007/annals.2006.163.1019}, issn={0003-486X}, mr={2215138}}
\bptok{imsref}%
\end{barticle}
\endbibitem

%b19 ###
%b19 #&#
\bibitem[\protect\citeauthoryear{Kuan}{2011}]{Kuan2011GFF}
\begin{bmisc}[author]
\bauthor{\bsnm{Kuan},~\bfnm{J.}\binits{J.}}
(\byear{2011}).
\bhowpublished{The Gaussian free field in interlacing particle systems.
 Available at arXiv:\arxivurl{1109.4444} [math-ph]}.
\bptok{imsref}%
\end{bmisc}
\endbibitem

%b20 ###
%b20 #&#
\bibitem[\protect\citeauthoryear{Okounkov}{2002}]{Okounkov2002}
\begin{bincollection}[author]
\bauthor{\bsnm{Okounkov},~\bfnm{A.}\binits{A.}}
(\byear{2002}).
\btitle{Symmetric functions and random partitions}.
In \bbooktitle{Symmetric Functions 2001: Surveys of Developments and
 Perspectives}
(\beditor{\bfnm{S.}\binits{S.}~\bsnm{Fomin}}, ed.).
\bpublisher{Kluwer Academic}, \blocation{Dordrecht}.
\bptok{imsref}%
\end{bincollection}
\endbibitem

%b21 ###
%b21 #&#
\bibitem[\protect\citeauthoryear{Okounkov and
 Reshetikhin}{2003}]{okounkov2003correlation}
\begin{barticle}[mr]
\bauthor{\bsnm{Okounkov},~\bfnm{Andrei}\binits{A.}} \AND
 \bauthor{\bsnm{Reshetikhin},~\bfnm{Nikolai}\binits{N.}}
(\byear{2003}).
\btitle{Correlation function of {S}chur process with application to local
 geometry of a random 3-dimensional {Y}oung diagram}.
\bjournal{J. Amer. Math. Soc.}
\bvolume{16}
\bpages{581--603 (electronic)}.
\bid{doi={10.1090/S0894-0347-03-00425-9}, issn={0894-0347}, mr={1969205}}
\bptok{imsref}%
\end{barticle}
\endbibitem

%b22 ###
%b22 #&#
\bibitem[\protect\citeauthoryear{Petrov}{2012}]{Petrov2012}
\begin{bmisc}[author]
\bauthor{\bsnm{Petrov},~\bfnm{L.}\binits{L.}}
(\byear{2012}).
\bhowpublished{Asymptotics of random lozenge tilings via Gelfand--Tsetlin
 schemes. Available at arXiv:\arxivurl{1202.3901} [math.PR]}.
\bptok{imsref}%
\end{bmisc}
\endbibitem

%b23 ###
%b23 #&#
\bibitem[\protect\citeauthoryear{Sheffield}{2005}]{Sheffield2008}
\begin{barticle}[mr]
\bauthor{\bsnm{Sheffield},~\bfnm{Scott}\binits{S.}}
(\byear{2005}).
\btitle{Random surfaces}.
\bjournal{Ast\'erisque}
\bvolume{304}
\bpages{vi+175}.
\bid{issn={0303-1179}, mr={2251117}}
\bptok{imsref}%
\end{barticle}
\endbibitem

%b24 ###
%b24 #&#
\bibitem[\protect\citeauthoryear{Sheffield}{2007}]{Sheffield2007GFF}
\begin{barticle}[mr]
\bauthor{\bsnm{Sheffield},~\bfnm{Scott}\binits{S.}}
(\byear{2007}).
\btitle{Gaussian free fields for mathematicians}.
\bjournal{Probab. Theory Related Fields}
\bvolume{139}
\bpages{521--541}.
\bid{doi={10.1007/s00440-006-0050-1}, issn={0178-8051}, mr={2322706}}
\bptok{imsref}%
\end{barticle}
\endbibitem

%b25 ###
%b25 #&#
\bibitem[\protect\citeauthoryear{Soshnikov}{2000}]{Soshnikov2000}
\begin{barticle}[mr]
\bauthor{\bsnm{Soshnikov},~\bfnm{A.}\binits{A.}}
(\byear{2000}).
\btitle{Determinantal random point fields}.
\bjournal{Russian Math. Surveys}
\bvolume{55}
\bpages{923--975}.
\bptok{imsref}%
\end{barticle}
\endbibitem

\end{thebibliography}
\end{document}